\theoremstyle{plain}
\newtheorem{theorem}{Theorem}[section]
\newtheorem*{theorem*}{Theorem}
\newtheorem{proposition}[theorem]{Proposition}
\newtheorem{corollary}[theorem]{Corollary}
\newtheorem{lemma}[theorem]{Lemma}
\theoremstyle{remark}
\newtheorem{remark}[theorem]{Remark}
\numberwithin{equation}{section}
\DeclareMathOperator{\hdim}{hdim}
\DeclareMathOperator{\hspec}{hspec}
\newcommand{\F}{\mathbb{F}}
\newcommand{\Z}{\mathbb{Z}}
\newcommand{\N}{\mathbb{N}}
\title[A group with full Hausdorff spectra]{A pro-$p$ group with full
  normal Hausdorff spectra}
\author[I. de las Heras]{Iker de las Heras} 
\address{Iker de las Heras: Department of Mathematics, University
  of the Basque Country UPV/EHU, 48080 Bilbao, Spain}
\email{iker.delasheras@ehu.eus}
\author[B. Klopsch]{Benjamin Klopsch} 
\address{Benjamin Klopsch:
  Mathematisches Institut, Heinrich-Heine-Universit\"at, 40225
  D\"usseldorf, Germany} \email{klopsch@math.uni-duesseldorf.de}
\thanks{The first author acknowledges support by the Spanish
  Government, grant MTM2017-86802-P, partly with FEDER funds, and by
  the Basque Government, grant IT974-16.  Furthermore he acknowledges
  a predoctoral grant of the University of the Basque Country.}
\keywords{Pro-$p$ groups, Hausdorff dimension, normal Hausdorff spectrum}
\subjclass[2010]{Primary  20E18;  Secondary 28A78}
\begin{document}

\begin{abstract}
  For each odd prime~$p$, we produce a $2$-generated pro-$p$ group $G$
  whose normal Hausdorff spectra
  \[
  \hspec_{\trianglelefteq}^{\mathcal{S}}(G) =
  \{ \hdim_{G}^{\mathcal{S}}(H)\mid H\trianglelefteq_\mathrm{c} G \}
  \]
  with respect to five standard filtration series $\mathcal{S}$ --
  namely the lower $p$-series, the dimension subgroup series, the
  $p$-power series, the iterated $p$-power series and the Frattini
  series -- are all equal to the full unit interval~$[0,1]$.   Here
  $\hdim_G^{\mathcal{S}} \colon \{ X\mid X \subseteq G \} \to[0,1]$
  denotes the Hausdorff dimension function associated to the natural
  translation-invariant metric induced by the filtration
  series~$\mathcal{S}$.
\end{abstract}

\maketitle


\section{Introduction}

The concept of Hausdorff dimension has led to interesting results in
the theory of profinite groups; for instance, see~\cite{KlThZR19} and
the references therein.  Let $G$ be an infinite countably based
profinite group and let $\mathcal{S}$ be a \emph{filtration series} of
$G$, that is, a chain $G = S_0\ge S_1 \ge S_2 \ge \ldots$ of open
normal subgroups $S_i \trianglelefteq_\mathrm{o} G$ such that
$\bigcap_i S_i = 1$.  These subgroups form a base of neighbourhoods of
$1$ and induce a translation-invariant metric on $G$ which, in turn,
associates a \emph{Hausdorff dimension}
$\hdim_G^{\mathcal{S}}(U) \in [0,1]$ to any subset $U\subseteq G$ with
respect to the filtration series $\mathcal{S}$.

Barnea and Shalev~\cite{BaSh97} established a group-theoretical
interpretation of $\hdim_G^{\mathcal{S}}(H)$ for closed subgroups
$H \le_\mathrm{c} G$; they showed that
\[
\hdim_G^{\mathcal{S}}(H)=\varliminf_{i\rightarrow\infty}\dfrac{\log_p
  \lvert HS_i : S_i \rvert}{\log_p \lvert G : S_i \rvert}
\]
can be regarded as a `logarithmic density' of $H$ in~$G$.  The
(ordinary) \emph{Hausdorff spectrum} of~$G$ is
$\hspec^{\mathcal{S}}(G) = \{\hdim_{G}^{\mathcal{S}}(H)\mid H
\le_\mathrm{c} G\}$.
The \emph{normal Hausdorff spectrum} of~$G$, defined as
\[
\hspec_{\trianglelefteq}^{\mathcal{S}}(G)=\{\hdim_{G}^{\mathcal{S}}(H)\mid H\trianglelefteq_\mathrm{c} G\},
\]
provides a snapshot of the normal subgroup structure of~$G$; its
significance was highlighted by Shalev in~\cite[\S 4.7]{Sh00}.  

Typically, the Hausdorff dimension function and the normal Hausdorff
spectrum depend very much on the underlying filtration~$\mathcal{S}$;
compare~\cite{KlThZR19}.  For a finitely generated pro-$p$ group~$G$,
there are natural choices for $\mathcal{S}$ that encapsulate
group-theoretic properties of~$G$: the lower $p$-series $\mathcal{L}$,
the dimension subgroup series~$\mathcal{D}$, the $p$-power
series~$\mathcal{P}$, the iterated $p$-power series~$\mathcal{P}^*$,
and the Frattini series~$\mathcal{F}$; see Section~\ref{sec:prelim}.
We refer to these filtration series loosely as the five standard
filtration series.

Several types of profinite groups with full ordinary Hausdorff spectra
$[0,1]$ have been identified.  The first examples of finitely
generated pro-$p$ groups $G$ with $\hspec^{\mathcal{P}}(G) = [0,1]$
were discovered by Levai (see \cite[\S 4.2]{Sh00}) and Klopsch~\cite[VIII, \S
7]{Kl99}; more complicated examples of profinite groups with full
Hausdorff spectra can be found, for example, in~\cite{AbVi05, BaVa19,
  GaGaKlxx}.  But until now no examples of finitely generated pro-$p$
groups with full normal Hausdorff spectra were known.

Already twenty years ago, Shalev~\cite[\S 4.7]{Sh00} put up the
challenge to construct finitely generated pro-$p$ groups with infinite
normal Hausdorff spectra and he asked whether the normal Hausdorff
spectra could even contain infinite real intervals.  Recently, Klopsch
and Thillaisundaram~\cite{KlTh19} succeeded in constructing such
examples, with respect to the five standard filtration series.  Even
though the normal Hausdorff spectra of their groups each contain
infinite intervals, none of the spectra covers the full unit
interval~$[0,1]$.  In this paper we modify the construction of Klopsch
and Thillaisundaram to produce the first example of a finitely
generated pro-$p$ group with full normal Hausdorff spectrum $[0,1]$,
with respect to any of the five standard filtration series.

Our construction proceeds as follows.  Throughout, let $p$ denote an
odd prime.  For $k\in \N$, consider the finite wreath product 
\begin{multline*}
  W_k = B_k \rtimes \langle \dot x_k \rangle \cong \langle \dot y_k\rangle \wr
  \langle \dot x_k \rangle, \qquad \text{with cyclic top group $\langle
    \dot x_k\rangle\cong C_{p^k}$}\\
  \text{and elementary abelian base group
    $B_k = \prod_{j = 0}^{p^k-1} \langle {\dot y_k}^{\,\dot  x_k^{\, j}}\rangle
    \cong C_p^{\, p^k}$.}
\end{multline*}
Basic structural properties of the finite wreath products $W_k$
transfer naturally to the inverse limit $W \cong \varprojlim_k W_k$,
i.e., the pro-$p$ wreath product
\begin{multline*}
  W = \langle \dot x, \dot y \rangle = B \rtimes \langle \dot x \rangle\cong C_p\
  \hat{\wr}\ \Z_p \qquad \text{with procyclic top group
    $\langle \dot x\rangle \cong
    \Z_p$} \\
  \text{and elementary abelian base group
    $B = \overline{\langle {\dot y}^{\dot x^j} \mid j \in\Z \rangle} \cong C_p^{\, \aleph_0}$.}
\end{multline*}
 
Let $F = F_2 = \langle \tilde x, \tilde y \rangle$ be a free pro-$p$
group on two generators, and let $\eta \colon F \to W$, resp.\
$\eta_k \colon F \to W_k$, for $k \in \N$, denote the continuous
epimorphisms induced by $\tilde x \mapsto \dot x$ and
$\tilde y \mapsto \dot y$, resp.\ $\tilde x \mapsto \dot x_k $ and
$\tilde y \mapsto \dot y_k$.  Set
$R = \mathrm{ker}(\eta) \trianglelefteq_\mathrm{c} F$ and
$R_k = \mathrm{ker}(\eta_k) \trianglelefteq_\mathrm{o} F$; set
$Y = B \eta^{-1} \trianglelefteq_\mathrm{c} F$ and
$Y_k = B_k \eta_k^{\, -1} \trianglelefteq_\mathrm{o} F$.  We define
\begin{align*}
  G  = F/N, & \quad \text{where $N = [R,Y]Y^p \trianglelefteq_\mathrm{c}
              F$}, \\
  G_k  = F/N_k, & \quad  
                  \text{where $N_k = [R_k,Y_k]Y_k^{\, p} \langle
                  {\tilde x}^{p^k} \rangle^F$.}
\end{align*}
Furthermore, we write 
\begin{align*}
  H = Y/N \trianglelefteq_\mathrm{c} G & \qquad \text{and} \qquad Z = R/N
                                         \trianglelefteq_\mathrm{c} G, \\
  H_k = Y_k/N_k \trianglelefteq G_k & \qquad \text{and} \qquad
                                      Z_k = R_k / N_k \trianglelefteq G_k.
\end{align*}
We denote the images of $\tilde x, \tilde y$ in~$G$, resp.\ in~$G_k$,
by $x,y$, resp.\ $x_k,y_k$, so that
$G = \overline{\langle x,y\rangle}$ and $G_k=\langle x_k,y_k\rangle$.

We observe that the finite groups $G_k$, $k\in\N$, naturally form an
inverse system and that $G \cong \varprojlim_k G_k$.  Furthermore, we have
$[H,Z]=1$, and $[H_k,Z_k]=1$ for all $k \in \N$.  

\begin{theorem}\label{thm:main}
  For $p > 2$, the $2$-generated pro-$p$ group $G$ constructed above
  has full normal Hausdorff spectra with respect to the five standard
  filtration series:
  \[
  \hspec_{\trianglelefteq}^{\mathcal{L}}(G) =
  \hspec_{\trianglelefteq}^{\mathcal{D}}(G) =
  \hspec_{\trianglelefteq}^{\mathcal{P}}(G) =
  \hspec_{\trianglelefteq}^{\mathcal{P}^*}(G) =
  \hspec_{\trianglelefteq}^{\mathcal{F}}(G) = [0,1].
  \]
\end{theorem}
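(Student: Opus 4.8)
The plan is to realise every value $\alpha \in [0,1]$ as $\hdim_G^{\mathcal S}(H')$ for a suitable closed normal subgroup $H' \trianglelefteq_\mathrm{c} G$, simultaneously for all five filtration series $\mathcal S \in \{\mathcal L, \mathcal D, \mathcal P, \mathcal P^*, \mathcal F\}$. First I would record the basic structure of $G$: since $[H,Z]=1$ and $Z$ is central-like, the subgroups sitting between $H$ and deeper terms of the filtration form a large abelian (indeed elementary abelian, because of the relation $Y^p \in N$) section on which we have fine control. The key is that $H = Y/N$ decomposes, compatibly with the action of $x$, into a product of cyclic pieces indexed by $\mathbb Z_p$ (mirroring the base group $B = \overline{\langle \dot y^{\dot x^j}\rangle}$ of the wreath product $W$), and $G$ acts on this by shifting the index. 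This ``coordinate'' picture lets us build normal subgroups of prescribed logarithmic density by selecting, inside $H$, an $x$-invariant subspace supported on a set of indices of prescribed lower density.

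Next I would pin down, for each of the five series $\mathcal S$, the indices $\lvert G S_i : S_i\rvert$ and, more importantly, the way the terms $S_i$ cut through the section $H$ — that is, the jumps $\lvert (H\cap S_i) S_{i+1} : S_{i+1}\rvert$. Because $G$ is constructed as an inverse limit of the finite quotients $G_k$, and the five standard series all interact transparently with the wreath-product layering, these indices should be computable (or at least estimable up to bounded multiplicative error in the exponent) uniformly in $\mathcal S$; this is exactly the sort of bookkeeping carried out in \cite{KlTh19}, which I would adapt. The upshot I expect is a formula of the shape $\hdim_G^{\mathcal S}(H') = \varliminf_i (\text{partial count of chosen coordinates up to level } i)/(\text{total count up to level } i)$, valid for all $\mathcal S$ at once, so that the combinatorial freedom in choosing the coordinate set translates directly into freedom in the value of the $\varliminf$.

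Then the construction of the witnessing subgroups is combinatorial: to hit a target $\alpha \in [0,1]$, choose an increasing sequence of ``checkpoints'' and alternately include long runs of coordinates and omit long runs, arranging the ratios to oscillate so that the $\liminf$ of the density equals $\alpha$ (for $\alpha \in (0,1)$ one can even arrange $\liminf = \alpha$ while $\limsup$ is larger, which is irrelevant here; for $\alpha = 1$ take all coordinates, i.e.\ $H' = H$, and for $\alpha = 0$ take a subgroup of infinite index that is ``thin'', e.g.\ trivial or a single procyclic piece — one must check the resulting set is genuinely normal and closed). Normality is guaranteed by taking the coordinate set to be a union of full $x$-orbits, or by passing to the $x$-invariant subgroup it generates, using $[H,Z]=1$ to see this stays inside the abelian section and remains normal in $G$; closedness is automatic for the inverse limit of the finite approximations.

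The main obstacle, I expect, is the uniformity across all five filtration series: one must verify that a single family of normal subgroups works for $\mathcal L$, $\mathcal D$, $\mathcal P$, $\mathcal P^*$ and $\mathcal F$ simultaneously, i.e.\ that the discrepancies between the five series are confined to the ``denominator'' $\log_p \lvert G : S_i\rvert$ in a way that is absorbed by the $\varliminf$ and does not distort the limiting density of the chosen coordinate set. Since the five series are known to be mutually ``commensurable'' at the level of these logarithmic counts for groups built from this wreath-product template (this is implicit in \cite{KlTh19}), I anticipate that once the coordinate description of $H$ and its interaction with the $\mathcal L$-series is established, the other four series follow by comparison lemmas with only notational overhead; the genuinely new input over \cite{KlTh19} is the sharper combinatorial choice of coordinate sets needed to reach the endpoints and fill the gaps of $[0,1]$, rather than merely an interval.
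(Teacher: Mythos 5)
Your strategy has a fatal flaw at its core: you propose to realise intermediate dimensions by normal subgroups of $H$ obtained from ``$x$-invariant subspaces supported on a set of indices of prescribed lower density'', mirroring the coordinate decomposition of the base group $B$ of $W\cong C_p\,\hat{\wr}\,\mathbb{Z}_p$. But the shift action of $\langle x\rangle\cong\Z_p$ on the index set is transitive, so a union of full $x$-orbits of coordinates is either empty or everything, and the closed $G$-invariant subgroups of $H/Z\cong B\cong\F_p[[t]]$ (a cyclic module over $\F_p[[\Z_p]]\cong\F_p[[t]]$) are exactly the ideals $(t^n)$ together with $0$: a single descending chain of \emph{finite-index} subgroups. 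Passing to the $x$-invariant subgroup generated by a nonempty coordinate set likewise yields a finite-index subgroup of $B$. Hence no normal subgroup of $G$ produces an intermediate density in the $B$-direction; this is precisely why the coordinate-density trick of Klopsch's thesis gives $W$ full \emph{ordinary} spectrum but only $\{0,1\}$ as the spectrum of normal subgroups inside $B$. Any $H'\trianglelefteq_{\mathrm{c}}G$ with $H'\le H$ has image in $H/Z$ of finite or infinite index with no middle ground, so all the interesting values must come from $H'\cap Z$ --- and your proposal says nothing about how to control subgroups of $Z$.

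The paper's argument is built on exactly this missing piece. One works inside the abelian normal subgroup $Z=[H,H]$, generated by the commutators $z_{i,j}=[c_i,c_j]$, which grows \emph{quadratically}: $\log_p\lvert Z:\gamma_i(G)\cap Z\rvert\sim i^2/4$ (Corollary~\ref{cor:index}). Since $[H,Z]=1$, the quotient $G/C_G(Z)$ is procyclic, so the normal closure of a single $z\in Z$ is $\langle z,[z,x],[z,x,x],\ldots\rangle$ and grows only linearly; by Lemma~\ref{lemma path/area} it has Hausdorff dimension $0$ in $Z$, and the interval-filling Lemma~2.3 (after \cite[Prop.~5.2]{KlThZR19}) then yields $[0,\hdim_G^{\mathcal S}(Z)]\subseteq\hspec_{\trianglelefteq}^{\mathcal S}(G)$ via Proposition~\ref{pro:path-area}. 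The remaining work is to show $\hdim_G^{\mathcal S}(Z)=1$ (strongly) for each series, which follows because $\log_p\lvert G:S_iZ\rvert$ is negligible against the quadratic growth of $Z$. Your final paragraph also underestimates the series-by-series work: the five filtrations cut through $Z$ at genuinely different rates ($S_i\cap Z=\gamma_i(G)\cap Z$ for $\mathcal L,\mathcal D$; $\approx\gamma_{p^i}(G)\cap Z$ for $\mathcal P,\mathcal P^*$; $\approx\gamma_{2[i-1]_p}(G)\cap Z$ for $\mathcal F$), and the Frattini case requires a separate computation (Lemma~\ref{lem:double-prod}, Corollary~\ref{cor:p-k} and the determination of $\Phi_i(G)$ up to controlled error), not a formal comparison lemma.
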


This resolves Problems~1.2 (b),(c) in~\cite{KlTh19} and Problem~5
in~\cite{BaSh97} for all five standard series.  The latter problem was
already solved previously for the series $\mathcal{D}$, $\mathcal{P}$,
$\mathcal{P}^*$ and~$\mathcal{F}$: in \cite[VIII, \S 7]{Kl99} it was
seen that $W \cong C_p\ \hat{\wr}\ \Z_p$ has
$\hspec^{\mathcal{D}}(W) = \hspec^{\mathcal{P}}(W) =
\hspec^{\mathcal{F}}(W) = [0,1]$,
and by completely different means it was shown in~\cite{GaGaKlxx} that
a non-abelian finitely generated free pro-$p$ group $E$ has
$\hspec^{\mathcal{D}}(E) = \hspec^{\mathcal{P}^*}(E) =
\hspec^{\mathcal{F}}(E) = [0,1]$.

\smallskip

\noindent
\textit{Notation.} Throughout, $p$ denotes an \emph{odd} prime.  From
now on, all subgroups of profinite groups are tacitly understood to be
closed subgroups to simplify the notation.  All iterated commutators
are left-normed, e.g., $[x,y,z] = [[x,y],z]$.

Section~$2$ contains basic material and fairly general considerations
that do not yet involve the notation used in the construction of the
particular groups $G$ and $G_k$, $k \in \N$.  

In Sections~\ref{sec:structure-G-k} and \ref{sec:normal-hspec} we use
the special notation from the introduction.  In addition, we write
$c_1=y$ and $c_i=[y,x,\overset{i-1}{\ldots},x]$ for
$i \in \N_{\ge 2}$; furthermore, we set $c_{i,1}=[c_i,y]$ and
$c_{i,j}=[c_i,y,x,\overset{j-1}{\ldots},x]$ for $j \in \N_{\ge 2}$.
To keep the notation manageable, we denote, for $k \in \N$, the
corresponding elements in the finite group $G_k$ by the same symbols
(suppressing the parameter~$k$): $c_1=y_k$ and
$c_i=[y_k,x_k,\overset{i-1}{\ldots},x_k]$ for $i\in\N_{\ge 2}$, and
similarly $c_{i,1}=[c_i,y_k]$ and
$c_{i,j}=[c_i,y_k,x_k,\overset{j-1}{\ldots},x_k]$ for
$j \in \N_{\ge 2}$.  From the context it will be clear whether our
considerations apply to $G$ or one of the groups~$G_k$.


\section{Preliminaries} \label{sec:prelim}

Let $G$ be an arbitrary finitely generated pro-$p$ group.  We recall
the definition of the five standard filtration series referred to in
the introduction.  The \emph{lower $p$-series} $\mathcal{L}$ of~$G$,
the \emph{dimension subgroup series} $\mathcal{D}$ of~$G$, the
\emph{$p$-power series} $\mathcal{P}$ of~$G$, the \emph{iterated
  $p$-power series} $\mathcal{P}^*$ of~$G$ and the \emph{Frattini
  series} $\mathcal{F}$ of~$G$ are defined recursively by
\begin{align*}
  \mathcal{L} & \colon P_1(G)=G\ \ \text{ and }\ \
                P_i(G)=P_{i-1}(G)^p[P_{i-1}(G),G]\ \ \text{for $i\ge
                2$,} \\
  \mathcal{D} & \colon D_1(G)=G\ \ \text{ and }\ \ D_i(G) = D_{\lceil
                i/p\rceil}(G)^p \prod\nolimits_{1\le j<i}[D_{j}(G),D_{i-j}(G)]\ \
                \text{for $i\ge 2$}, \\
  \mathcal{P} & \colon  \pi_i(G) = G^{p^i}=\langle g^{p^i}\mid g\in
                G\rangle \ \text{for $i \ge 0$}, \\
  \mathcal{P}^* & \colon  \pi^*_0(G)=G\ \ \text{ and }\ \
                  \pi^*_i(G) = \pi^*_{i-1}(G)^p\ \text{for $i \ge 1$,} \\
  \mathcal{F} & \colon \Phi_0(G)=G\ \ \text{ and }\ \
                \Phi_i(G)=\Phi_{i-1}(G)^p[\Phi_{i-1}(G),\Phi_{i-1}(G)]\
                \text{for $i \ge 1$.}
\end{align*}

Next we recall two standard commutator identities;
compare~\cite[Prop.~1.1.32]{LeMK02}.

\begin{lemma} \label{lem:com-ids} Let $G=\langle a, b\rangle$ be a
  finite $p$-group, for $p \ge 3$, such that $\gamma_2(G)$ has
  exponent~$p$, and let $r\in\N$.  For $u,v\in G$, let $K(u,v)$ denote
  the normal closure in $G$ of all commutators in $\{u,v\}$ of weight
  at least $p^r$ that have weight at least $2$ in~$v$.  

  Then the following congruences hold:
  \[
    (ab)^{p^r} \equiv_{K(a,b)} a^{p^r} b^{p^r}
    [b,a,\overset{p^r-1}{\ldots},a] \qquad \text{and} \qquad
    [a^{p^r},b] \equiv_{K(a,[a,b])} [a,b,a,\overset{p^r-1}{\ldots},a].
  \]
\end{lemma}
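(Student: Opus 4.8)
The goal is to prove the two commutator congruences in Lemma~\ref{lem:com-ids} for a finite $p$-group $G=\langle a,b\rangle$ with $p\ge 3$ and $\exp(\gamma_2(G))=p$. The natural tool is the Hall--Petrescu collection formula, which I would apply in the form $(ab)^{n} = a^n b^n \prod_{j\ge 2} d_j^{\binom{n}{j}}$, where $d_j\in\gamma_j(\langle a,b\rangle)$ can be chosen as a product of basic commutators of weight exactly $j$ in $a,b$, each of weight at least $1$ in $b$. (One can arrange the standard collection so that the ``leftover'' commutators $d_j$ all involve $b$; the terms involving only $a$ are already absorbed into $a^n$.) I would set $n=p^r$ throughout.

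For the first congruence, I apply Hall--Petrescu with $n=p^r$. Everything happens modulo $K(a,b)$, the normal closure of commutators of weight $\ge p^r$ that have weight $\ge 2$ in $b$. First, since $\gamma_2(G)$ has exponent $p$ and $p\mid\binom{p^r}{j}$ whenever $1\le j\le p^r-1$ with $j\ne$ a power of $p$ — more precisely, $v_p\!\binom{p^r}{j}=r-v_p(j)$ — any $d_j$ that has weight $\ge 2$ in $b$ contributes $d_j^{\binom{p^r}{j}}$, which is trivial whenever $p\mid\binom{p^r}{j}$, i.e.\ for all $j$ that are not powers of $p$; and when $j=p^s$ with $s<r$, the exponent $\binom{p^r}{p^s}$ is still divisible by $p$ unless... actually $v_p\binom{p^r}{p^s}=r-s>0$, so it is divisible by $p$ for all $s<r$. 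Hence every $d_j$ of weight $\ge2$ in $b$ with $j<p^r$ dies. The $d_j$ of weight exactly $1$ in $b$ form a single basic commutator $[b,a,\ldots,a]$ (weight $j$), and the only one surviving is $j=p^r$, giving $[b,a,\overset{p^r-1}{\ldots},a]^{\binom{p^r}{p^r}} = [b,a,\overset{p^r-1}{\ldots},a]$. Finally, all $d_j$ with $j>p^r$ that still have weight $\ge2$ in $b$ lie in $K(a,b)$ by definition; those with weight exactly $1$ in $b$ and $j>p^r$ are of the form $[b,a,\ldots,a]$ of weight $>p^r$, and since $\gamma_2(G)$ is abelian of exponent $p$ one checks these also lie in a subgroup absorbed by $K(a,b)$ — this needs the observation that $[b,a,\overset{m}{\ldots},a]$ for $m\ge p^r$ can be rewritten, using $[[b,a,\ldots,a],a]$, as an element of the normal closure of weight-$\ge p^r$, weight-$\ge 2$-in-$b$ commutators together with lower-order terms. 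This bookkeeping is the part I expect to be fussiest.

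For the second congruence $[a^{p^r},b]\equiv_{K(a,[a,b])}[a,b,a,\overset{p^r-1}{\ldots},a]$, I would instead expand $[a^{p^r},b]$ directly. Write $c=[a,b]$, so $a^b=ac$ and $(a^{p^r})^b=(ac)^{p^r}$; then $[a^{p^r},b]=a^{-p^r}(ac)^{p^r}$. Now apply the first congruence (or Hall--Petrescu again) to $(ac)^{p^r}$ with the pair $\{a,c\}$ in place of $\{a,b\}$: modulo the normal closure $K(a,c)$ of commutators of weight $\ge p^r$ with weight $\ge2$ in $c$, one gets $(ac)^{p^r}\equiv a^{p^r} c^{p^r}[c,a,\overset{p^r-1}{\ldots},a]$. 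Since $c=[a,b]\in\gamma_2(G)$ has order dividing $p$ and $p\mid p^r$, we have $c^{p^r}=1$; and $[c,a,\overset{p^r-1}{\ldots},a]=[[a,b],a,\overset{p^r-1}{\ldots},a]=[a,b,a,\overset{p^r-1}{\ldots},a]$ by the left-normed convention. Therefore $[a^{p^r},b]=a^{-p^r}(ac)^{p^r}\equiv [a,b,a,\overset{p^r-1}{\ldots},a]$ modulo $K(a,c)$, and since $K(a,[a,b])=K(a,c)$ this is exactly the claim. The only subtlety is that applying the first congruence requires $\gamma_2(\langle a,c\rangle)\le\gamma_3(G)$ to have exponent $p$, which follows from $\exp(\gamma_2(G))=p$ and $\gamma_2(\langle a,c\rangle)\le\gamma_2(G)$.

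**Main obstacle.** The genuinely delicate point is controlling the collection remainder: identifying precisely which basic commutators of weight $\ge p^r$ survive the binomial exponents $\binom{p^r}{j}$ modulo $\exp(\gamma_2(G))=p$, and checking that all surviving terms other than the single $[b,a,\overset{p^r-1}{\ldots},a]$ are swallowed by $K(a,b)$. This is where the hypotheses $p\ge3$ and $\exp(\gamma_2(G))=p$ are used in an essential way — $p\ge 3$ ensures $\binom{p}{2}$ and similar low-order binomials are divisible by $p$, which fails for $p=2$ — and it is the step I would write out most carefully, referring to the standard treatment in \cite[Prop.~1.1.32]{LeMK02} to avoid reproving Hall--Petrescu from scratch.
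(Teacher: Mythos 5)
The paper does not actually prove this lemma: it is stated as a recalled standard fact with a pointer to \cite[Prop.~1.1.32]{LeMK02}, so there is no in-paper argument to match yours against. Your overall strategy --- P.~Hall collection for the first congruence, and deducing the second from the first via $[a^{p^r},b]=a^{-p^r}(ac)^{p^r}$ with $c=[a,b]$, using $c^{p^r}=1$ and $K(a,c)=K(a,[a,b])$ --- is the standard route, and your treatment of the second congruence is correct and clean (the only point to make explicit is that the first congruence applied inside $\langle a,c\rangle$ gives a normal closure in $\langle a,c\rangle$, which is contained in the normal closure in $G$, so the reduction is legitimate).

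There is, however, one genuinely wrong step in your handling of the first congruence: the disposal of the collected terms of weight $j>p^r$ that are linear in $b$. You assert that $[b,a,\overset{m}{\ldots},a]$ for $m\ge p^r$ ``lies in a subgroup absorbed by $K(a,b)$'' after some rewriting, ``since $\gamma_2(G)$ is abelian of exponent $p$.'' First, $\gamma_2(G)$ is not assumed abelian (only of exponent $p$); in the groups $G_k$ of this paper it is not abelian. Second, and more seriously, these commutators have weight exactly $1$ in $b$ and therefore do \emph{not} lie in $K(a,b)$, nor can they be rewritten into it: in the present paper they are precisely the elements $c_{m+1}$, which remain nontrivial and independent modulo $[H,H]\supseteq K(a,b)$. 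The correct reason these terms do not appear is arithmetic, not group-theoretic: in the exact collected form of $(ab)^n$ the basic commutator $[b,a,\overset{w-1}{\ldots},a]$ of weight $w$ occurs with exponent exactly $\binom{n}{w}$ (P.~Hall's exponent formula), which vanishes for $n=p^r<w$. With that correction the rest of your bookkeeping goes through: for $2\le j\le p^r-1$ every collected factor has exponent a $\mathbb{Z}$-combination of $\binom{p^r}{i}$ with $1\le i\le j<p^r$, hence divisible by $p$, and dies because $\gamma_2(G)$ has exponent $p$; the weight-$\ge p^r$ factors of $b$-weight $\ge 2$ lie in $K(a,b)$; and the single survivor is $[b,a,\overset{p^r-1}{\ldots},a]^{\binom{p^r}{p^r}}$. (A minor further quibble: your explanation of where $p\ge 3$ enters via $p\nmid\binom{p}{2}$ for $p=2$ is off --- $\binom{p^r}{j}$ is divisible by $p$ for $0<j<p^r$ even when $p=2$; the oddness of $p$ is a blanket hypothesis of the paper rather than something visibly needed at this step.)
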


The main ingredient of the proof of Theorem~\ref{thm:main} is
Proposition~\ref{pro:path-area}.  For the proof we first establish two
lemmata.  The first lemma is a variation
of~\cite[Prop.~5.2]{KlThZR19}.

\begin{lemma} Let $G$ be a countably based pro-$p$ group, and let
  $Z \trianglelefteq_\mathrm{c} G$ be infinite. Let
  $\mathcal{S} \colon Z_0 \supseteq Z_1 \supseteq \ldots$ be a
  filtration series of~$Z$ consisting of $G$-invariant subgroups
  $Z_i \trianglelefteq_\mathrm{o} Z$.  Let $\eta \in [0,1]$ be such
  that the normal closure in $G$ of every finite collection of
  elements $z_1, \ldots, z_m \in Z$ satisfies
  $\hdim_Z^\mathcal{S}(\langle z_1, \ldots, z_m \rangle^G) \le \eta$.

  Then there exists $H \le_\mathrm{c} Z$ with $H \trianglelefteq G$
  such that $\hdim_Z^\mathcal{S}(H) = \eta$.
\end{lemma}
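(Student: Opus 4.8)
The plan is to build $H$ as an increasing union (topological closure of a union) of normal closures of finitely many elements, carefully choosing the generators so that the logarithmic densities $\lvert HZ_i : Z_i\rvert / \lvert Z : Z_i\rvert$ approach $\eta$ infinitely often (forcing $\hdim_Z^{\mathcal S}(H)\le\eta$ via the $\varliminf$ formula of Barnea--Shalev), while a complementary choice of indices keeps the densities bounded below, so that the $\varliminf$ is exactly $\eta$. Concretely, I would first record the two basic monotonicity facts: for a closed subgroup $K\le_{\mathrm c} Z$ one has $\hdim_Z^{\mathcal S}(K)=\varliminf_i \log_p\lvert KZ_i:Z_i\rvert/\log_p\lvert Z:Z_i\rvert$, and if $K\le L$ then the finite quotients satisfy $\lvert KZ_i:Z_i\rvert\le\lvert LZ_i:Z_i\rvert$. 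The hypothesis gives, for any finite tuple, a normal-in-$G$ closed subgroup of $Z$ whose Hausdorff dimension is at most $\eta$; since $\hdim$ is always $\ge 0$, the interesting content is the upper bound $\eta$.

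Next I would handle the trivial case $\eta=0$ by taking $H=1$, and assume $\eta>0$. The construction is a diagonal/interleaving argument. I would enumerate a dense subset $z^{(1)},z^{(2)},\ldots$ of $Z$ (possible since $G$, hence $Z$, is countably based) and build a strictly increasing sequence of finite subsets together with an increasing sequence of filtration indices. At stage $n$ I would add $z^{(n)}$ to the current generating set to guarantee that the union is dense in $Z$ and hence that its closure $H$ will be all of $Z$ only if $\eta=1$ — actually I must be careful here: I do \emph{not} want $H=Z$ unless $\eta=1$. So instead I would enumerate only a candidate pool and at each stage decide whether adding the next element still permits the dimension bound; the key is that the normal closure of any finite set already has dimension $\le\eta$, so I can always afford to add finitely many generators as long as I then "wait" (pass to a large enough index $i$) to see the density dip back down near the value realised by that finite normal closure. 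The real subtlety is that adding later generators can only increase the subgroup, so I must ensure the $\varliminf$ does not exceed $\eta$: I arrange an increasing sequence $i_1<i_2<\cdots$ such that at index $i_n$ the finite normal closure $\langle z_1,\ldots,z_{m_n}\rangle^G$ already chosen has relative density $<\eta+1/n$, and I only add new generators at indices beyond $i_n$; since those new generators lie in $Z_{i_n'}$ for suitable large $i_n'$ they do not affect the quotient at level $i_n$. Passing $n\to\infty$ gives $\hdim_Z^{\mathcal S}(H)\le\eta$.

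For the matching lower bound I would, at alternate stages, force the density up: because $\hdim_Z^{\mathcal S}(\langle z_1,\ldots,z_m\rangle^G)$ can be made to equal a value as close to $\eta$ from below as we like (using that the union of an increasing chain of such normal closures, whose dimensions I can push up towards $\eta$ by the standard fact that one realises any value $\le$ the dimension of a subgroup containing a suitable $Z$-dense piece), I select additional elements whose normal closure has dimension close to $\eta$, guaranteeing that $\log_p\lvert HZ_i:Z_i\rvert/\log_p\lvert Z:Z_i\rvert$ exceeds $\eta-1/n$ for infinitely many $i$. Combining the two interleaved requirements yields $\hdim_Z^{\mathcal S}(H)=\eta$ with $H\trianglelefteq G$ (normality of $H$ is automatic, being the closure of an ascending union of $G$-normal subgroups). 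I expect the main obstacle to be the bookkeeping that simultaneously (i) keeps the $\varliminf$ from overshooting $\eta$ despite the monotone growth of the subgroup, and (ii) drives the density up close to $\eta$ infinitely often; this is exactly the kind of two-sided squeeze handled in \cite[Prop.~5.2]{KlThZR19}, and the proof here is a variation on that theme, the new point being that one only needs the uniform bound $\eta$ on normal closures of \emph{finite} subsets rather than a structural description of $Z$.
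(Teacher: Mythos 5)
Your high-level plan --- realising $H$ as the closure of an ascending union of normal closures of finite subsets of $Z$, with interleaved checkpoints squeezing the logarithmic densities towards $\eta$ --- is indeed the paper's approach (the paper simply adapts the proof of \cite[Prop.~5.2]{KlThZR19}). However, as written your argument has a genuine gap in the lower bound. You arrange that the density $\log_p\lvert HZ_i:Z_i\rvert/\log_p\lvert Z:Z_i\rvert$ exceeds $\eta-1/n$ for \emph{infinitely many}~$i$. Since $\hdim_Z^{\mathcal{S}}(H)$ is a \emph{lower} limit, this only yields $\varlimsup\ge\eta$; combined with your (correct) upper-bound step you obtain $\varliminf\le\eta\le\varlimsup$, which does not force $\hdim_Z^{\mathcal{S}}(H)=\eta$. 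To get $\varliminf\ge\eta$ you must guarantee the density is at least $\eta-\epsilon$ at \emph{every} sufficiently large level, i.e.\ you must fill in the entire stretch of levels between consecutive checkpoints, not merely hit large values sporadically.

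The second, more structural omission is that you never confront the specific difficulty of working with \emph{normal} closures, which is precisely the point the paper's proof singles out. To raise the density at a level $j$ in small controlled increments --- small enough that the next checkpoint value does not overshoot $\eta+1/n$ --- one needs elements $z\in Z$ whose normal closure modulo $Z_j$ has order exactly~$p$, so that $\log_p\lvert \langle z\rangle^G Z_j:Z_j\rvert$ can be increased in unit steps. For ordinary subgroup closures this is trivial; for normal closures the normal closure of a single element modulo $Z_j$ could a priori be large. The remedy is that $G/Z_j$ acts nilpotently on the finite $p$-group $Z/Z_j$ and on its quotients by $G$-invariant subgroups, so every nontrivial $G$-invariant section contains a nontrivial $G$-central element; choosing such elements gives the required unit increments. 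Your substitute device --- ``selecting elements whose normal closure has dimension close to $\eta$'' --- is both unjustified (the hypothesis only bounds the dimensions of finite normal closures from above by $\eta$; it does not assert that values near $\eta$ are attained) and insufficient, since having Hausdorff dimension near $\eta$ says nothing about the density at the finitely many particular levels you need to control at a given stage.
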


\begin{proof} 
  The claim can be verified in close analogy to the proof
  of~\cite[Prop.~5.2]{KlThZR19}.  One constructs the subgroup
  $H \le_\mathrm{c} Z$ as
  $H = \langle H_0 \cup H_1 \cup \ldots \rangle$, where
  $1 = H_0 \subseteq H_1 \subseteq \ldots$ is a suitable ascending
  sequence of subgroups $H_i \le_\mathrm{c} Z$ each of which is the
  normal closure in $G$ of finitely many elements.  To see that the
  argument in op.\ cit.\ can be used, it suffices to observe that, for
  each $i \in \N$, the pro-$p$ group $G/Z_i$ acts nilpotently on the
  finite $p$-group~$Z/Z_i$ (and its quotients by $G$-invariant
  subgroups).
\end{proof}

\begin{lemma} \label{lemma path/area} Let $G$ be a countably based
  profinite group with an infinite abelian normal subgroup
  $Z \trianglelefteq_\mathrm{c} G$ and $x \in G$ such that
  $G = \langle x \rangle C_G(Z)$.  Let
  $\mathcal{S}:Z=Z_0\ge Z_1\ge\ldots$ be a filtration series of $Z$
  consisting of $G$-invariant subgroups
  $Z_i \trianglelefteq_\mathrm{o} Z$; for $i \in \N_0$, let $p^{e_i}$
  be the exponent of~$Z/Z_i$.  Suppose that, for every $i \in \N_0$,
  there exist $n_i \in \N$ and $N_i \le_\mathrm{c} Z$ such that
  \[
  \gamma_{n_i +1}(G)\cap Z\le Z_i\le N_i \qquad \text{and} \qquad
  \varliminf_{i\rightarrow\infty} \frac{e_in_i}{\log_p \lvert Z : N_i
    \rvert}=0.
  \]
  
  Then every finite collection of elements $z_1, \ldots, z_m \in Z$
  satisfies
  \[
  \hdim_Z^\mathcal{S}(\langle z_1, \ldots, z_m \rangle^G) = 0.
  \]
\end{lemma}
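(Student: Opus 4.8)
The plan is to make the normal closure $\langle z_1,\dots,z_m\rangle^{G}$ explicit, to exploit that $x$ acts unipotently of bounded index on each finite quotient $Z/Z_i$, and thereby to bound the number of generators of the image of this normal closure in $Z/Z_i$.

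First I would record that, since $Z$ is abelian and $G=\langle x\rangle\, C_{G}(Z)$, conjugation by an arbitrary element of $G$ acts on $Z$ exactly like conjugation by a suitable element of $\langle x\rangle$, whence
\[
  H:=\langle z_1,\dots,z_m\rangle^{G}
   =\overline{\langle\, z_l^{\,x^{j}} \mid 1\le l\le m,\ j\in\Z \,\rangle}\le Z .
\]
Now fix $i\in\N_0$. Because $Z_i$ is $G$-invariant, $x$ induces an automorphism $\alpha$ of the finite abelian $p$-group $Q_i:=Z/Z_i$, which I write additively. The inclusion
\[
  [Z,\underbrace{G,\dots,G}_{n_i}]\ \le\ \gamma_{n_i+1}(G)\cap Z\ \le\ Z_i ,
\]
together with the fact that $C_{G}(Z)$ centralises $Z$ (so that modulo $Z_i$ this commutator subgroup is spanned by the values $[z,\underbrace{x,\dots,x}_{n_i}]$, $z\in Z$), shows that $(\alpha-1)^{\,n_i}=0$ on $Q_i$.

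Consequently $\alpha$ is invertible on $Q_i$, and both $\alpha$ and $\alpha^{-1}$ are $\Z$-polynomials in $\alpha-1$ of degree $<n_i$; hence, for every $j\in\Z$ and every $l$, the image $\alpha^{j}(\bar z_l)$ of $z_l^{\,x^{j}}$ in $Q_i$ lies in the subgroup generated by $(\alpha-1)^{t}(\bar z_l)$ with $0\le t<n_i$. It follows that $HZ_i/Z_i$ is generated by the at most $m\,n_i$ elements $(\alpha-1)^{t}(\bar z_l)$, $0\le t<n_i$, $1\le l\le m$, and since $Q_i$ has exponent $p^{e_i}$ this gives $\lvert HZ_i:Z_i\rvert\le p^{\,m n_i e_i}$. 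As $Z_i\le N_i$ we have $\log_p\lvert Z:Z_i\rvert\ge\log_p\lvert Z:N_i\rvert$, and therefore
\[
  \frac{\log_p\lvert HZ_i:Z_i\rvert}{\log_p\lvert Z:Z_i\rvert}
   \ \le\ \frac{m\,n_i e_i}{\log_p\lvert Z:N_i\rvert}
   \qquad\text{for all sufficiently large }i .
\]
Passing to $\varliminf_{i\to\infty}$ and invoking the hypothesis $\varliminf_{i\to\infty} e_i n_i/\log_p\lvert Z:N_i\rvert=0$, one obtains $\hdim_Z^{\mathcal{S}}(H)\le m\cdot 0=0$, hence $\hdim_Z^{\mathcal{S}}(H)=0$.

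I expect no step to be a serious obstacle; the one point requiring care is the passage from the containment $\gamma_{n_i+1}(G)\cap Z\le Z_i$ to the relation $(\alpha-1)^{n_i}=0$ on the \emph{whole} of $Z/Z_i$ --- one must note that the $n_i$-fold commutator subgroup $[Z,G,\dots,G]$ is contained in $\gamma_{n_i+1}(G)\cap Z$ and, because $C_{G}(Z)$ acts trivially on $Z$, coincides modulo $Z_i$ with the subgroup generated by all $[z,x,\dots,x]$ having $n_i$ entries $x$. Everything afterwards --- the polynomial expression for $\alpha^{\pm1}$, the counting bound $p^{\,m n_i e_i}$, and the $\varliminf$ computation --- is routine.
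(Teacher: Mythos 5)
Your proposal is correct and is essentially the paper's own argument: the paper likewise reduces the normal closure modulo $Z_i$ to the $n_i$ generators $z_l, [z_l,x],\ldots,[z_l,x,\overset{n_i-1}{\ldots},x]$ (your $(\alpha-1)^t(\bar z_l)$ in additive notation), bounds $\log_p\lvert \langle z_l\rangle^G Z_i : Z_i\rvert$ by $e_i n_i$, and sums over the $m$ generators before taking the lower limit. Your reformulation via the unipotent operator $\alpha-1$ is only a cosmetic repackaging of the same computation.
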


\begin{proof}
  Consider first a single element $z \in Z$.  From
  \[
  \langle z\rangle^G=\langle z,[z,x],[z,x,x],\ldots\rangle,
  \]
  and $\gamma_{n_i +1}(G)\cap Z\le Z_i$, for $i \in \N$, we deduce that
 \[
 \langle z\rangle^G Z_i = \langle
 z,[z,x],\ldots,[z,x,\overset{n_i-1}{\ldots},x]\rangle Z_i;
  \]
  in particular, since $Z$ is abelian, this yields
  \[
  \log_p \lvert \langle z \rangle^G Z_i : Z_i \rvert \le e_in_i.
  \]

  Now consider finitely many elements $z_1, \ldots, z_m \in Z$.  Since
  $Z$ is abelian, we have
  $\langle z_1,\ldots,z_m \rangle^G = \langle z_1\rangle^G \cdots
  \langle z_m\rangle^G$.  From this we deduce
   \[
   \hdim_Z^{\mathcal{S}}(\langle z_1,\ldots,z_m \rangle^G) \leq
     \varliminf_{i\rightarrow\infty}\frac{ \sum_{j=1}^m \log_p \lvert \langle z_j
    \rangle^G Z_i : Z_i \rvert}{\log_p \lvert Z : Z_i \rvert} \le
   \varliminf_{i\rightarrow\infty} \frac{m e_i n_i}{\log_p \lvert Z : N_i
   \rvert} = 0. \qedhere
 \]
\end{proof}

For an infinite countably based pro-$p$ group $G$, equipped with a
filtration series
$\mathcal{S} \colon G = S_0 \supseteq S_1\supseteq \ldots$, and a
closed subgroup $H \le_\mathrm{c} G$ we adopt the following
terminology from~\cite{KlTh19}: we say that $H$ has \emph{strong}
Hausdorff dimension in $G$ with respect to $\mathcal{S}$ if its
Hausdorff dimension is given by a proper limit, i.e., if
\[
\hdim^{\mathcal{S}}_G(H) = \lim_{i \to \infty} \frac{\log_p \lvert H
  S_i : S_i \rvert}{\log_p \lvert G : S_i \rvert}.
\]

Using the previous two lemmata, we follow the proof
of~\cite[Thm.~5.4]{KlThZR19} to obtain our main tool.

\begin{proposition} \label{pro:path-area} Let $G$ be a countably based
  pro-$p$ group with an infinite abelian normal subgroup
  $Z \trianglelefteq_\mathrm{c} G$ such that $G/C_G(Z)$ is procyclic.
  Let $\mathcal{S} \colon G = S_0\ge S_1\ge \ldots$ be a filtration
  series of $G$ and consider the induced filtration series
  $\mathcal{S} |_Z \colon Z = S_0 \cap Z \ge S_1 \cap Z \ge \ldots$ of
  $Z$; for $i\in\N_0$, let $p^{e_i}$ be the exponent of
  $Z/(S_i \cap Z)$.  Suppose that, for every $i\in \N_0$, there exist
  $n_i \in \N$ and $M_i \le_\mathrm{c} G$ such that
  \[
  \gamma_{n_i + 1}(G) \cap Z \le S_i \cap Z \le M_i \qquad
  \text{and} \qquad
  \varliminf_{i\rightarrow\infty}\frac{e_in_i}{\log_p \lvert Z : M_i
    \cap Z \rvert} = 0.
  \]

  If $Z$ has strong Hausdorff dimension
  $\xi = \hdim_G^{\mathcal{S}}(Z) \in [0,1]$ then we have
  \[ 
  [0,\xi]\subseteq \hspec_{\trianglelefteq}^{\mathcal{S}}(G).
  \]
\end{proposition}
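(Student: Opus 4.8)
The plan is to realise every value in $[0,\xi]$ as $\hdim_G^{\mathcal{S}}(H)$ for a suitable closed normal subgroup $H\trianglelefteq_{\mathrm c}G$ contained in $Z$. Since $\hdim_G^{\mathcal{S}}(1)=0$, the case $\xi=0$ needs no argument, so from now on I would assume $\xi>0$. The strategy, following the proof of~\cite[Thm.~5.4]{KlThZR19}, has three steps: first, use Lemma~\ref{lemma path/area} to show that normal closures of finite subsets of $Z$ have vanishing Hausdorff dimension inside $Z$; second, feed this into the first of the two lemmata above to produce, for each $\eta'\in[0,1]$, a closed subgroup $H\le Z$ with $H\trianglelefteq G$ and $\hdim_Z^{\mathcal{S}|_Z}(H)=\eta'$; third, transport this back to $G$ using that $Z$ has \emph{strong} Hausdorff dimension $\xi$, so that $\hdim_G^{\mathcal{S}}(H)=\xi\cdot\hdim_Z^{\mathcal{S}|_Z}(H)=\xi\eta'$.

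For the first step I would apply Lemma~\ref{lemma path/area} to $Z\trianglelefteq_{\mathrm c}G$ equipped with the induced filtration series $\mathcal{S}|_Z\colon Z=Z_0\ge Z_1\ge\ldots$, where $Z_i=S_i\cap Z$. Its hypotheses transfer from those of the proposition in a routine way: procyclicity of $G/C_G(Z)$ provides an element $x\in G$ with $G=\langle x\rangle C_G(Z)$; each $Z_i$ is open in $Z$ and $G$-invariant since $S_i,Z\trianglelefteq G$; the exponent of $Z/Z_i$ is $p^{e_i}$; and, taking $N_i=M_i\cap Z$, one has $\gamma_{n_i+1}(G)\cap Z\le Z_i\le N_i$ together with $\varliminf_{i\to\infty}e_in_i/\log_p\lvert Z:N_i\rvert=0$. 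The lemma then yields $\hdim_Z^{\mathcal{S}|_Z}(\langle z_1,\ldots,z_m\rangle^G)=0$ for every finite tuple $z_1,\ldots,z_m\in Z$. For the second step, the first of the two lemmata above, applied to the filtration series $\mathcal{S}|_Z$ of $Z$ by $G$-invariant open subgroups, works for any chosen $\eta'\in[0,1]$: by the previous step its hypothesis (that the $\mathcal{S}|_Z$-dimension of the normal closure of any finite subset of $Z$ is at most $\eta'$) holds trivially, and it produces $H=H(\eta')\le_{\mathrm c}Z$ with $H\trianglelefteq G$ and $\hdim_Z^{\mathcal{S}|_Z}(H)=\eta'$.

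The third step is the only place where care is needed. For $H\le Z$ one has $\lvert HS_i:S_i\rvert=\lvert HZ_i:Z_i\rvert$ and $\lvert ZS_i:S_i\rvert=\lvert Z:Z_i\rvert$ for all $i$, whence
\[
\frac{\log_p\lvert HS_i:S_i\rvert}{\log_p\lvert G:S_i\rvert}=\frac{\log_p\lvert HZ_i:Z_i\rvert}{\log_p\lvert Z:Z_i\rvert}\cdot\frac{\log_p\lvert Z:Z_i\rvert}{\log_p\lvert G:S_i\rvert}.
\]
Here the second factor converges to $\xi$ precisely because $Z$ has \emph{strong} Hausdorff dimension $\xi$ in $G$; since $\xi>0$ and the first factor lies in $[0,1]$, a short estimate shows that the $\varliminf$ of the product equals $\xi$ times the $\varliminf$ of the first factor, that is, $\hdim_G^{\mathcal{S}}(H)=\xi\cdot\hdim_Z^{\mathcal{S}|_Z}(H)$. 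Applying this to $H(\eta')$ gives $\hdim_G^{\mathcal{S}}(H(\eta'))=\xi\eta'$, and as $\eta'$ runs through $[0,1]$ the value $\xi\eta'$ runs through all of $[0,\xi]$, which yields $[0,\xi]\subseteq\hspec_{\trianglelefteq}^{\mathcal{S}}(G)$. I expect the main obstacle to be not any single deep step but the careful verification that the hypotheses of the proposition translate into those of the two auxiliary lemmata, together with this last point: the logarithmic density of $Z$ in $G$ must be attained as a genuine limit, without which it would not factor out of the $\varliminf$ cleanly.
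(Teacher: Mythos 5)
Your argument is correct and is essentially the proof the paper intends: the authors only indicate that one should combine the two preceding lemmata and follow \cite[Thm.~5.4]{KlThZR19}, and your three steps (normal closures of finite subsets of $Z$ have dimension $0$ via Lemma~\ref{lemma path/area} with $N_i=M_i\cap Z$, realisation of every $\eta'\in[0,1]$ inside $Z$ via the first lemma, and rescaling by the strong Hausdorff dimension $\xi$) are exactly that route. The hypothesis verifications and the $\varliminf$ factorisation are all sound.
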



\section{The structure of the finite groups
  $G_k$} \label{sec:structure-G-k}

In this section we collect some structural results for the finite
$p$-groups $G_k$ defined in the introduction.  We use the notation set
up there, in particular, in the last paragraph of that section: $W_k$,
$B_k$, $\dot x_k$, $\dot y_k$, $G_k$, $H_k$, $Z_k$, $x_k$, $y_k$,
$c_i$, $c_{i,j}$, \ldots.

\begin{proposition}[Prop.~2.6 in \cite{KlTh19}] \label{pro:Wk-lcs} For
  $k\in\N$, the wreath product $W_k \cong C_p \wr C_{p^k}$ is
  nilpotent of class $p^k$ and the lower central series of $W_k$
  satisfies
  \begin{align*}
    W_k & =\gamma_1(W_k) = \langle \dot x_k, \dot y_k \rangle
          \gamma_2(W_k) \text{ with }W_k / \gamma_2(W_k) \cong C_{p^k}
          \times C_{p}, \\
   \gamma_i(W_k) & = \langle [\dot y_k, \dot x_k,
                   \overset{i-1}{\ldots}, \dot
                   x_k]\rangle\gamma_{i+1}(W_k) \text{ with
                   }\gamma_i(W_k)/\gamma_{i+1}(W_k)\cong C_p \text{
                   for } 2\le i\le p^k.
  \end{align*}
  In particular, the base group satisfies
  \[
  B_k = \langle \dot y_k \rangle \gamma_2(W_k) = \langle
  \dot y_k, [\dot y_k, \dot x_k], \ldots, [\dot y_k, \dot x_k,
  \overset{p^k-1}{\ldots}, \dot x_k]\rangle.
  \]
\end{proposition}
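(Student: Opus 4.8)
The plan is to translate everything into the modular group ring of the top group, which renders the lower central series of $W_k$ completely transparent. First I would identify the base group $B_k$ with the regular module over $\mathbb{F}_p\langle\dot x_k\rangle$, that is, with the commutative ring $R=\mathbb{F}_p[t]/(t^{p^k}-1)$, by sending $\dot y_k$ to $1$ and letting conjugation by $\dot x_k$ act as multiplication by~$t$. Because we are in characteristic~$p$ and $p^k$ is a power of~$p$, we have $t^{p^k}-1=(t-1)^{p^k}$; writing $s=t-1$ this gives $R\cong\mathbb{F}_p[s]/(s^{p^k})$, a local principal ideal ring whose ideals form the single chain $R=(s^0)\supsetneq(s^1)\supsetneq\cdots\supsetneq(s^{p^k-1})\supsetneq(s^{p^k})=0$, with each consecutive quotient isomorphic to~$C_p$. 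Under this dictionary the commutator $[b,\dot x_k]=b^{-1}b^{\dot x_k}$ corresponds additively to $(t-1)v=sv$ whenever $b\leftrightarrow v$, so the iterated commutator $[\dot y_k,\dot x_k,\overset{i-1}{\ldots},\dot x_k]$ corresponds to $s^{i-1}$.

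Next I would compute the lower central series inside this picture. Since $W_k/B_k\cong C_{p^k}$ is abelian, $\gamma_2(W_k)\le B_k$; moreover $[B_k,\dot x_k]$ is normal in $W_k$ (it is $\dot x_k$-invariant since $[b,\dot x_k]^{\dot x_k}=[b^{\dot x_k},\dot x_k]$, and $B_k$-invariant since $B_k$ is abelian) and modulo it $\dot x_k$ centralises $B_k$, so that the quotient is central-by-cyclic, hence abelian; therefore $\gamma_2(W_k)=[B_k,\dot x_k]$, which corresponds to $sR=(s)$. For every $i\ge 2$ one has $\gamma_i(W_k)\le B_k$, so $\gamma_{i+1}(W_k)=[\gamma_i(W_k),W_k]=[\gamma_i(W_k),\langle\dot x_k\rangle]$ — the commutator with $B_k$ being trivial — which under the identification is $s\,\gamma_i(W_k)$. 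A straightforward induction then yields $\gamma_i(W_k)\leftrightarrow(s^{i-1})$ for all $i\ge 2$.

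From $\gamma_i(W_k)\leftrightarrow(s^{i-1})$ every assertion follows by inspection of the ideal chain: the factor $\gamma_i(W_k)/\gamma_{i+1}(W_k)\cong(s^{i-1})/(s^i)\cong C_p$ for $2\le i\le p^k$; the relations $\gamma_{p^k+1}(W_k)\leftrightarrow(s^{p^k})=0$ and $\gamma_{p^k}(W_k)\ne 1$ show that the nilpotency class equals exactly~$p^k$; since $(s^{i-1})$ is $\mathbb{F}_p$-spanned by $s^{i-1},s^i,\ldots,s^{p^k-1}$ we get $\gamma_i(W_k)=\langle[\dot y_k,\dot x_k,\overset{i-1}{\ldots},\dot x_k]\rangle\,\gamma_{i+1}(W_k)$, and likewise $B_k\leftrightarrow R$ is spanned by $1,s,\ldots,s^{p^k-1}$, which gives $B_k=\langle\dot y_k,[\dot y_k,\dot x_k],\ldots,[\dot y_k,\dot x_k,\overset{p^k-1}{\ldots},\dot x_k]\rangle=\langle\dot y_k\rangle\,\gamma_2(W_k)$. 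Finally $W_k/\gamma_2(W_k)$ is abelian and generated by the image of $\dot x_k$, of order $p^k$ because $\langle\dot x_k\rangle\cap\gamma_2(W_k)\le\langle\dot x_k\rangle\cap B_k=1$, together with the image of $\dot y_k$, of order $p$ because $\dot y_k\in B_k\setminus\gamma_2(W_k)$ and $\dot y_k^{\,p}=1$; these two cyclic images intersect trivially, one lying in the image of $\langle\dot x_k\rangle$ and the other in that of $B_k$, so $W_k/\gamma_2(W_k)\cong C_{p^k}\times C_p$, and in particular $W_k=\langle\dot x_k,\dot y_k\rangle\,\gamma_2(W_k)$.

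The only genuinely delicate point is the first step: one must set up the ring translation with care, in particular using that $t^{p^k}-1$ factors as $(t-1)^{p^k}$ over $\mathbb{F}_p$, which pins down the radical of $R$ and hence the entire ideal lattice, and checking that iterated commutation with $\dot x_k$ is, on the nose, multiplication by $t-1$. After that the argument is bookkeeping with a principal ideal ring, the one real group-theoretic input being the identification $\gamma_2(W_k)=[B_k,\dot x_k]$.
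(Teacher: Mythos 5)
Your argument is correct and complete. Note, however, that the paper does not prove this proposition at all: it is quoted verbatim from \cite[Prop.~2.6]{KlTh19}, so there is no in-paper proof to compare against. What you have written is the classical self-contained computation: identify $B_k$ with the regular module $\mathbb{F}_p[t]/(t^{p^k}-1)\cong\mathbb{F}_p[s]/(s^{p^k})$, $s=t-1$, observe that commutation with $\dot x_k$ is multiplication by $s$, pin down $\gamma_2(W_k)=[B_k,\dot x_k]=(s)$, and run the induction $\gamma_{i+1}(W_k)=s\,\gamma_i(W_k)=(s^{i})$ down the unique ideal chain of this local principal ideal ring. All the individual steps check out: the normality and centrality argument identifying $\gamma_2(W_k)$, the reduction of $[\gamma_i(W_k),W_k]$ to $[\gamma_i(W_k),\langle\dot x_k\rangle]$ using that $\gamma_i(W_k)\le B_k$ is centralised by the abelian base group, the use of $(t-1)^{p^k}=t^{p^k}-1$ in characteristic $p$ to get $(s^{p^k})=0$ and hence nilpotency class exactly $p^k$, and the direct-product decomposition of $W_k/\gamma_2(W_k)$. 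This is almost certainly the same mechanism underlying the cited proof in \cite{KlTh19}; in any case it is a valid replacement for the external citation.
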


\begin{proposition} \label{pro:order-G-k} For $k\in\N$, we have
  $G_k = \langle x_k \rangle \ltimes H_k$, where
  $\langle x_k \rangle \cong C_{p^k}$ and $H_k$ is freely generated in
  the variety of class-$2$ nilpotent groups of exponent~$p$ by the
  conjugates $y_k^{\, x_k^{\, j}}$, $0 \le j < p^k$.  In particular,
  the logarithmic order of $G_k$ is
  \[
  \log_p \lvert G_k \rvert = k+p^k+\binom{p^k}{2}.
  \]
\end{proposition}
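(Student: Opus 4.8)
The plan is to pin down $\lvert G_k\rvert$ exactly by two matching bounds and then read the structural claims off the equality case. First I record that $N_k\le R_k=\ker\eta_k$, which holds because $[R_k,Y_k]\le R_k$ while $\eta_k(Y_k^{\,p})=B_k^{\,p}=1$ and $\eta_k(\tilde x^{p^k})=\dot x_k^{\,p^k}=1$. Thus $\eta_k$ factors through an epimorphism $G_k\twoheadrightarrow W_k$ sending $x_k\mapsto\dot x_k$; since $\eta_k(Y_k)=B_k$ and $W_k/B_k=\langle\dot x_kB_k\rangle\cong C_{p^k}$, we get $F=\langle\tilde x\rangle Y_k$, so $G_k=\langle x_k\rangle H_k$ with $H_k=Y_k/N_k$ normal in $G_k$, and $x_k^{p^k}=1$ gives $\lvert\langle x_k\rangle\rvert\le p^k$.

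Next I would show $H_k$ lies in the variety $\mathcal V$ of class-$2$ nilpotent groups of exponent $p$ and is generated by the $p^k$ conjugates $y_k^{\, x_k^{\, j}}$, $0\le j<p^k$. Exponent $p$ is immediate from $Y_k^{\,p}\le N_k$; class $\le2$ follows since $\eta_k(Y_k)=B_k$ is abelian, so $[Y_k,Y_k]\le R_k$ and hence $[Y_k,Y_k,Y_k]\le[R_k,Y_k]\le N_k$. From $B_k=\prod_{j=0}^{p^k-1}\langle\dot y_k^{\, \dot x_k^{\, j}}\rangle$ one gets $Y_k=\langle\tilde y^{\tilde x^{j}}\mid 0\le j<p^k\rangle R_k$, so $H_k=\langle y_k^{\, x_k^{\, j}}\rangle\,Z_k$ with $Z_k=R_kN_k/N_k$. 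A Reidemeister--Schreier computation for $Y_k$ in $F$ (equivalently, the standard presentation of $W_k\cong C_p\wr C_{p^k}$) shows that $Y_k$ is free pro-$p$ on $\tilde x^{p^k}$ and the $\tilde y^{\tilde x^{j}}$, and that $R_k$ is the normal closure in $F$ of $\tilde x^{p^k}$ together with $\tilde y^{\,p}$ and the commutators $[\tilde y,\tilde y^{\tilde x^{i}}]$. Since $\tilde x^{p^k}\in N_k$, $\tilde y^{\,p}\in Y_k^{\,p}\le N_k$ and $[\tilde y,\tilde y^{\tilde x^{i}}]\in\gamma_2(Y_k)$, this forces $R_k\le\gamma_2(Y_k)N_k$, i.e.\ $Z_k\le\gamma_2(H_k)$. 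As $[R_k,Y_k]\le N_k$ makes $Z_k$ central in $H_k$ and $H_k$ has class $\le2$, we obtain $\gamma_2(H_k)=[\langle y_k^{\, x_k^{\, j}}\rangle,\langle y_k^{\, x_k^{\, j}}\rangle]\le\langle y_k^{\, x_k^{\, j}}\rangle$, whence $H_k=\langle y_k^{\, x_k^{\, j}}\mid 0\le j<p^k\rangle$. Because $p$ is odd, the relatively free group $L$ of rank $p^k$ in $\mathcal V$ has order $p^{\,p^k+\binom{p^k}{2}}$, and $H_k$ is a quotient of $L$ (sending free generators to the $y_k^{\, x_k^{\, j}}$); hence $\lvert G_k\rvert\le\lvert\langle x_k\rangle\rvert\cdot\lvert H_k\rvert\le p^k\cdot p^{\,p^k+\binom{p^k}{2}}$.

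For the reverse bound I would realise a group of exactly this order as a quotient of $G_k$. Take $L$ as above with free generators $t_0,\dots,t_{p^k-1}$, let $\sigma\in\operatorname{Aut}(L)$ cyclically permute the $t_j$, and put $\widetilde G=L\rtimes\langle s\mid s^{p^k}\rangle$ with $s$ acting as $\sigma$, so $\lvert\widetilde G\rvert=p^k\cdot p^{\,p^k+\binom{p^k}{2}}$. Define $\phi\colon F\to\widetilde G$ by $\tilde x\mapsto s$ and $\tilde y\mapsto t_0$; it is surjective since $t_j=\phi(\tilde y^{\tilde x^{j}})$, and composing with the natural epimorphism $\widetilde G\twoheadrightarrow W_k$ induced by $L\to L/\gamma_2(L)\cong B_k$ recovers $\eta_k$. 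Consequently $\phi(Y_k)=L$ and $\phi(R_k)\le\gamma_2(L)$, so $\phi(\tilde x^{p^k})=1$, $\phi(Y_k^{\,p})=L^{\,p}=1$ and $\phi([R_k,Y_k])\le[\gamma_2(L),L]=1$; hence $N_k\le\ker\phi$ and $\phi$ factors through an epimorphism $G_k\twoheadrightarrow\widetilde G$, so $\lvert G_k\rvert\ge p^k\cdot p^{\,p^k+\binom{p^k}{2}}$. Therefore $\lvert G_k\rvert=p^{\,k+p^k+\binom{p^k}{2}}$, and equality forces $\lvert\langle x_k\rangle\rvert=p^k$, $\langle x_k\rangle\cap H_k=1$, and the surjection $L\twoheadrightarrow H_k$ to be an isomorphism -- that is, $\langle x_k\rangle\cong C_{p^k}$, $G_k=\langle x_k\rangle\ltimes H_k$, and $H_k$ is freely generated in $\mathcal V$ by the $y_k^{\, x_k^{\, j}}$ -- and the formula for $\log_p\lvert G_k\rvert$ follows.

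The step I expect to be the main obstacle is the pair of ``presentation-level'' facts: getting a grip on $R_k$ sharp enough to conclude $Z_k\le\gamma_2(H_k)$ (so that $H_k$ is genuinely $p^k$-generated rather than $(p^k+1)$-generated), and verifying that the auxiliary homomorphism $\phi$ kills all of $N_k$. Both come down to tracking the three generators of $N_k$ against the Reidemeister--Schreier description of $Y_k$ and $R_k$ inside $F$.
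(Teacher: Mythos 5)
Your proof is correct and follows essentially the same strategy as the paper: a two-sided counting argument whose lower bound comes from exhibiting the very same model group $C_{p^k}\ltimes L$ (the paper's $\widetilde G_k=A\ltimes M$) as a quotient of $G_k$, with the structural claims then read off from the equality case. The only difference is that you derive the upper bound $\log_p\lvert H_k\rvert\le p^k+\binom{p^k}{2}$ self-containedly, via the standard presentation of $W_k\cong C_p\wr C_{p^k}$ and the resulting inclusion $R_k\le\gamma_2(Y_k)N_k$, whereas the paper instead imports the explicit generating set of $Z_k$ from \cite[Eq.~(3.1)]{KlTh19}.
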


\begin{proof}
  The proof is very similar to that of \cite[Lem.~5.1]{KlTh19}.  From
  $G_k/Z_k\cong W_k $ we obtain
  \[
  \log_p \lvert G_k \rvert = \log_p \lvert G_k/Z_k \rvert + \log_p
  \lvert Z_k \rvert = k + p^k + \log_p \lvert Z_k \rvert.
  \]
  By construction, $Z_k$ is elementary abelian, and
  from~\cite[Eq.~(3.1)]{KlTh19} we get
  \[
  Z_k = \Big\langle \big[ y_k^{\, x_k^{p^i}},y_k^{\, x_k^{p^j}} \big]
  \mid 0 \le i < j \le p^{k}-1 \Big\rangle.
  \]
  This yields $\log_p \lvert G_k \rvert \le k+p^k+\binom{p^k}{2}$.  

  Consider the finite $p$-group
  \[
  M = \langle b_0, \ldots, b_{p^k-1} \rangle = E / \gamma_3(E)E^p,
  \]
  where $E$ is the free group on $p^k$ generators.  Then, the images
  of $b_0, \ldots, b_{p^k-1}$ generate independently the elementary
  abelian quotient $M/M'$, and the commutators $[b_i,b_j]$ with
  $0\le i<j\le p^k-1$ generate independently the elementary abelian
  subgroup~$M'$.  The latter can be checked, for instance, by
  considering homomorphisms from $M$ onto the group
  $\mathrm{Heis}(\F_p)$ of upper unitriangular $3\times 3$ matrices
  over the prime field~$\F_p$.  Next consider the faithful action of
  the cyclic group $A \cong \langle a \rangle \cong C_{p^k}$ on~$M$
  induced by
  \[
  b_i^{\, a} =
  \begin{cases}
    b_{i+1} & \text{if $0\le i\le p^k-2$,} \\
    b_0 & \text{if $i=p^{k}-1$.}
  \end{cases}
  \]
  We define $\widetilde{G}_k = A \ltimes M$ and note that
  $\log_p \lvert G_k \rvert \le k + p^k + \binom{p^k}{2} = \log_p
  \lvert \widetilde{G}_k \rvert$.
  Furthermore, it is easy to see that $\widetilde{G}_k/M' \cong W_k$.
  Thus there is an epimorphism
  $\varepsilon \colon G_k \to \widetilde{G}_k$ with
  $x_k \,\varepsilon = a$ and $y_k \,\varepsilon = b_0$, and from
  $\lvert G_k \rvert \le \lvert \widetilde{G}_k \rvert$ we conclude
  that $G_k\cong \widetilde{G}_k$.
\end{proof}

\begin{remark} \label{rem:H'-eq-Z}
  \label{rem:derived-subgroup}
  The proof of Proposition~\ref{pro:order-G-k} shows that
  $[H_k,H_k] = Z_k$ for $k \in \N$, and thus $[H,H] = Z$.
\end{remark}

\begin{proposition} \label{pro:G-k-lcs} For $k\in\N$, the nilpotency
  class of $G_k$ is $2p^k-1$.  The terms of the lower central series
  of $G_k$ are as follows:
  \[
  \gamma_1(G_k)=G_k=\langle x_k,y_k\rangle \, \gamma_2(G_k) \quad
  \text{with $G_k/\gamma_2(G_k)\cong C_{p^k}\times C_p$}
  \]
  and, with the notation
  \begin{align*}
    I_1 & = \{ i \mid 2 \le i \le p^k \text{ with } i
    \equiv_2 0 \}, && I_2  = \{ i \mid 2 \le i \le p^k \text{ with } i
                      \equiv_2 1 \}, \\
    I_3 &= \{ i \mid p^k + 1 \le i \le 2 p^k-1 \text{ with } i
    \equiv_2 0 \}, && I_4  = \{ i \mid p^k + 1 \le i \le 2p^k-1 \text{ with } i
                      \equiv_2 1 \},
  \end{align*}
  the series continues as
  \begin{align*}
    \gamma_i(G_k)  %
    & =
      \begin{cases}
        \langle c_i, \, c_{2,i-2}, \, c_{4,i-4}, \, \ldots, \,
        c_{i-2,2} \rangle \gamma_{i+1}(G_k) & \text{for $i \in I_1$,} \\
        \langle c_i, \, c_{2,i-2}, \, c_{4,i-4}, \, \ldots, \,
        c_{i-1,1} \rangle \gamma_{i+1}(G_k) & \text{for $i \in I_2$,}
        \\
        \langle c_{i-p^k+1,p^k-1}, \, c_{i-p^k+3,p^k-3}, \, \ldots, \,
        c_{p^k-1,i-p^k+1} \rangle \gamma_{i+1}(G_k) & \text{for $i
          \in I_3$,} \\
        \langle c_{i-p^k,p^k}, \, c_{i-p^k+2,p^k-2}, \, \ldots, \,
        c_{p^k-1,i-p^k+1} \rangle \gamma_{i+1}(G_k) & \text{for $i \in I_4$}
      \end{cases}
  \end{align*}
  with
  \begin{align*}
    \gamma_i(G_k)/\gamma_{i+1}(G_k)  %
    & \cong \begin{cases}
      C_p^{\, i/2} & \text{for $i \in I_1$,} \\
      C_p^{\, (i+1)/2} &  \text{for $i \in I_2$,} \\
      C_p^{\,  (2p^k-i)/2} & \text{for $i \in I_3$,} \\
      C_p^{\, (2p^k-i+1)/2}  & \text{for $i \in I_4$.}
    \end{cases}
  \end{align*}
\end{proposition}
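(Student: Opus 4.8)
My plan is to combine the known structure of the quotient $G_k/Z_k\cong W_k$, given by Proposition~\ref{pro:Wk-lcs}, with a module-theoretic analysis of the abelian normal subgroup $Z_k\trianglelefteq G_k$. First I record that $c_a\in H_k$ and $c_{a,b}\in[H_k,H_k]=Z_k$ for all admissible indices (using $H_k\trianglelefteq G_k$ and Remark~\ref{rem:derived-subgroup}), together with the elementary identities
\[
[c_a,x_k]=c_{a+1},\qquad [c_a,y_k]=c_{a,1},\qquad [c_{a,b},x_k]=c_{a,b+1},\qquad [c_{a,b},y_k]=1,
\]
the last holding because $[Z_k,H_k]=1$. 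Starting from $\gamma_2(G_k)=\langle c_2\rangle^{G_k}$ and using that $Z_k=[H_k,H_k]$ is generated by the commutators $[c_a,c_b]$ (with $[c_1,c_b]=c_{b,1}^{-1}$), an induction on $i$ then yields
\[
\gamma_i(G_k)=\big\langle\, c_a\ (a\ge i),\ c_{a,b}\ (a+b\ge i)\,\big\rangle\qquad(i\ge 2).
\]
Since $\gamma_i(G_k)\le H_k$ for $i\ge 2$ and $H_k$ has exponent $p$, the factor $\gamma_i(G_k)/\gamma_{i+1}(G_k)$ is elementary abelian, spanned by the images of $c_i$ and of the $c_{a,b}$ with $a+b=i$.

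Put $W_i=\gamma_i(G_k)\cap Z_k$. From $G_k/Z_k\cong W_k$ and Proposition~\ref{pro:Wk-lcs}, the image of $c_i$ in $\gamma_i(G_k)/\gamma_{i+1}(G_k)$ is nontrivial precisely when $2\le i\le p^k$, and in that range $\langle c_i\rangle\gamma_{i+1}(G_k)/\gamma_{i+1}(G_k)$ is a direct complement of $W_i\gamma_{i+1}(G_k)/\gamma_{i+1}(G_k)$; for $i>p^k$ one has $\gamma_i(G_k)=W_i$ and $c_i$ has to be re-expressed through the $c_{a,b}$. So everything reduces to understanding the filtration $(W_i)_i$ of $Z_k$ and the position of the $c_{a,b}$ in it. By Proposition~\ref{pro:order-G-k}, $\bar H:=H_k/Z_k$ is the regular $\langle x_k\rangle$-module $\F_p[\langle x_k\rangle]\cong\F_p[t]/(t-1)^{p^k}$ in which (writing bars for images) $\bar c_1,\dots,\bar c_{p^k}$ correspond to $1,(t-1),\dots,(t-1)^{p^k-1}$, and the commutator map induces an isomorphism $Z_k\cong\bigwedge^2\bar H$ of $\langle x_k\rangle$-modules (from which $Z_k$ turns out to be a free $\F_p[\langle x_k\rangle]$-module of rank $(p^k-1)/2$). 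Writing $f_{c,d}=[c_{c+1},c_{d+1}]$ for $0\le c,d\le p^k-1$ and $f_{c,d}=0$ otherwise, the $f_{c,d}$ with $c<d$ form an $\F_p$-basis of $Z_k$, one has $f_{c,d}(t-1)=f_{c,d+1}+f_{c+1,d}+f_{c+1,d+1}$, and $c_{a,1}=f_{a-1,0}$, $c_{a,b}=c_{a,1}(t-1)^{b-1}$.

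Iterating the last relation shows $c_{a,b}\in\gamma_{a+b}(G_k)\cap Z_k$ and, modulo $W_{a+b+1}$, $c_{a,b}\equiv\sum_{l=0}^{b-1}\binom{b-1}{l}f_{a-1+l,\,b-1-l}$; conversely $f_{c,d}=[c_{c+1},c_{d+1}]\in\gamma_{c+d+2}(G_k)\cap Z_k$. Combined with the generating set above this gives $W_i=\langle f_{c,d}\mid c+d\ge i-2\rangle$, so that $\dim_{\F_p}W_i/W_{i+1}$ equals the number of pairs $(c,d)$ with $c<d$, $c+d=i-2$ and $0\le c,d\le p^k-1$. Adding the contribution of $c_i$ for $2\le i\le p^k$ turns this count into exactly the exponents $i/2$, $(i+1)/2$, $(2p^k-i)/2$ and $(2p^k-i+1)/2$ asserted on $I_1,\dots,I_4$; in particular the factors vanish for $i\ge 2p^k$ while $\gamma_{2p^k-1}(G_k)/\gamma_{2p^k}(G_k)\cong C_p$, so the nilpotency class is $2p^k-1$. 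As a consistency check, $\sum_i\log_p\lvert\gamma_i(G_k)/\gamma_{i+1}(G_k)\rvert$ then equals $k+p^k+\binom{p^k}{2}=\log_p\lvert G_k\rvert$ from Proposition~\ref{pro:order-G-k}.

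I expect the hard part to be this last step: verifying that the \emph{particular} elements displayed in the statement ($c_i$ together with the $c_{a,b}$ of the prescribed shape) are not merely a generating set but a \emph{basis} of $\gamma_i(G_k)/\gamma_{i+1}(G_k)$. By the congruence above, and after reducing via $f_{c,d}=-f_{d,c}$, this amounts to the invertibility over $\F_p$ of a square matrix of binomial coefficients, of size equal to the claimed exponent, indexed by the admissible pairs $(a,b)$ with $a+b=i$. Controlling these binomials modulo $p$ --- together with, for $i>p^k$, the additional tasks of re-expressing $c_i$ in terms of the $c_{a,b}$ and of tracking $(t-1)^{b-1}$ acting on $Z_k$ when $b$ is close to $p^k$ --- is the delicate computational core of the argument.
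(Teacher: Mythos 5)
Your module-theoretic framework is a genuinely different route from the paper's, and the dimension count it produces for the factors $\gamma_i(G_k)/\gamma_{i+1}(G_k)$ is consistent with the statement. But there is a real gap at exactly the point you flag yourself: you never prove that the \emph{particular} elements displayed in the proposition ($c_i$ together with the $c_{a,b}$ where $a$ runs over even values only, and the analogous patterns for $i>p^k$) generate $\gamma_i(G_k)$ modulo $\gamma_{i+1}(G_k)$. Your induction only yields the larger generating set $\{c_a \mid a\ge i\}\cup\{c_{a,b}\mid a+b\ge i\}$, with roughly twice as many elements per layer; cutting this down to the displayed subset is, as you say, equivalent to the invertibility over $\F_p$ of certain matrices of binomial coefficients $\binom{b-1}{l}$, and since these vanish modulo $p$ for most index pairs (Lucas), that invertibility is not a routine check --- it is the substantive content of the proposition. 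Declaring it ``the delicate computational core'' and stopping there leaves the proof incomplete. (A secondary, smaller issue: your identification $W_i=\langle f_{c,d}\mid c+d\ge i-2\rangle$ needs the inclusion $\gamma_i(G_k)\cap Z_k\subseteq\langle f_{c,d}\mid c+d\ge i-2\rangle$, which in turn requires handling the elements $c_a$ with $a>p^k$, which lie in $Z_k$; you gloss over this.)

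It is worth noting how the paper sidesteps the computation you got stuck on, by a two-sided squeeze. It first proves generation by the displayed elements directly via commutator manipulation: the Hall--Witt identity gives $c_{i-1,1}\equiv[c_{i-2},c_2]^{-1}$ modulo the subgroup $M$ generated by the displayed elements and $\gamma_{i+1}(G_k)$, and a secondary induction on $m-n$ shows $[c_m,c_n]\equiv 1\pmod{M}$ for all $m+n=i$; for $i>p^k$ it additionally uses $c_{p^k+1}\in\gamma_{p^k+2}(G_k)$ to discard generators. This yields only \emph{upper} bounds for the ranks of the factors. It then observes that these upper bounds already sum to $\log_p\lvert G_k\rvert=k+p^k+\binom{p^k}{2}$ from Proposition~\ref{pro:order-G-k}, so every bound is attained and the displayed generators are automatically independent --- no binomial determinant is ever computed. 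If you wish to retain your $\bigwedge^2$ picture, the cleanest repair is to import exactly this Hall--Witt reduction to establish generation by the displayed set; after that, either your exact dimension count or the paper's global order count yields independence for free.
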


\begin{proof}
  The description of $\gamma_1(G_k)$ modulo $\gamma_2(G_k)$ is clear.
  Now consider $i \in I_1$, that is $2 \le i \le p^k$ and
  $i \equiv_2 0$.  Our first aim is to show, by induction on~$i$, that
  \begin{equation} \label{equ:cases-I1-I2}
    \begin{split}
      \gamma_i(G_k) & = \langle c_i, \, c_{2,i-2}, \, c_{4,i-4}, \,
      \ldots, \,
      c_{i-2,2} \rangle \gamma_{i+1}(G_k), \\
      \gamma_{i+1}(G_k) & = \langle c_{i+1}, \, c_{2,i-1}, \,
      c_{4,i-3}, \, \ldots, c_{i,1} \rangle\gamma_{i+2}(G_k).
    \end{split}
  \end{equation}
  The induction base, i.e., the case $i=2$, is clear:
  $\gamma_2(G_k) = \langle [x_k, y_k] \rangle \gamma_3(G_k) = \langle
  c_2 \rangle \gamma_3(G_k)$
  and
  $\gamma_3(G_k) = \langle [c_2,x_k], [c_2,y_k] \rangle \gamma_4(G_k)
  = \langle c_3,c_{2,1}\rangle \gamma_4(G_k)$.
  Next suppose that $i \ge 4$.  The induction hypothesis yields
  \begin{align*}
    \gamma_{i-2}(G_k) & =\langle c_{i-2}, \, c_{2,i-4}, \, c_{4,i-6},
                        \, \ldots, \,
                        c_{i-4,2} \rangle\gamma_{i-1}(G_k), \\
    \gamma_{i-1}(G_k) & = \langle c_{i-1}, \, c_{2,i-3}, \, c_{4,i-5},
                        \, \ldots, \,
                        c_{i-2,1}\rangle\gamma_{i}(G_k).
  \end{align*}
  From $c_{m,n}\in [H_k,H_k] = Z_k$ we deduce $[c_{m,n},y_k]=1$ for
  all $m,n \ge 1$.  This gives
  \[
  \gamma_i(G_k) = \langle
  c_i, \, c_{i-1,1}, \, c_{2,i-2}, \, c_{4,i-4}, \, \ldots, \, c_{i-2,2}\rangle\gamma_{i+1}(G_k).
  \]
  We put
  \[
  M= \langle c_i, \, c_{2,i-2}, \, c_{4,i-4}, \, \ldots,
  c_{i-2,2} \rangle \gamma_{i+1}(G_k)
  \] 
  and aim to show that $c_{i-1,1}\in M$.  This will establish the
  first equation in~\eqref{equ:cases-I1-I2}; the second equation then
  follows immediately, again from $[c_{n,m},y_k]=1$ for $m,n \ge 1$.

  As $c_{i-1,1} = [c_{i-2},x_k,y_k]$, the Hall--Witt identity yields
  \[
  c_{i-1,1} [x_k,y_k,c_{i-2}] [y_k,c_{i-2},x_k] \equiv 1 \pmod{M}.
  \]
  Furthermore, $[y_k,c_{i-2},x_k] \equiv c_{i-2,2}^{\, -1} \equiv 1$
  modulo $M$, and this gives
   \[
   c_{i-1,1}\equiv [c_{i-2},c_2]^{-1}\pmod{M}.
   \]
   Thus it suffices to prove that 
   \[
   [c_m,c_n] \equiv 1 \pmod{M} \qquad \text{for all $m,n \in \N$ with
   $m \ge n\ge 2$ and $m+n = i$.}
   \]
   We argue by induction on $m-n$.  If $m-n=0$ then $m=n$ and
   $[c_m,c_n]=1$.  Now suppose that $m-n\ge 1$.  As
   $[c_m,c_n] = [c_{m-1},x_k,c_n]$, the Hall--Witt identity yields
   \[ 
   [c_m,c_n][x_k,c_n,c_{m-1}][c_n,c_{m-1},x_k]\equiv 1\pmod{M},
   \]
   where
   $[x_k,c_n,c_{m-1}] \equiv [c_{m-1},c_{n+1}] \equiv 1 \pmod{M}$ by
   induction.  This yields
   \[
   [c_m,c_n] \equiv[c_n,c_{m-1},x_k]^{-1}
   \equiv [[c_n,c_{m-1}]^{-1},x_k] \pmod{M}.
   \]
   From $[c_n,c_{m-1}]^{-1} \in \gamma_{i-1}(G_k)$ we deduce that
   \[ 
   [c_n,c_{m-1}]^{-1} \equiv c_{i-1}^{\, r_0} c_{2,i-3}^{\, r_2}
   c_{4,i-5}^{\, r_4} \cdots c_{i-2,1}^{\, r_{i-2}}
   \pmod{\gamma_i(G_k)}
   \]
   for suitable $r_0, r_2, \ldots, r_{i-2} \in \Z$.  It follows that
   \[
   [c_m,c_n] \equiv [[c_n,c_{m-1}]^{-1},x_k] \equiv c_i^{\, r_0}
   c_{2,i-2}^{\, r_2} c_{4,i-4}^{\, r_4} \cdots c_{i-2,2}^{\, r_{i-2}}
   \equiv 1 \pmod{M}.
   \]
   This finishes the proof of~\eqref{equ:cases-I1-I2}.  Finally, we
   observe from~\eqref{equ:cases-I1-I2} that 
   \[
   \gamma_i(G_k)/\gamma_{i+1}(G_k) \cong C_p^{\, l(i)} \qquad
   \text{and} \qquad \gamma_{i+1}(G_k)/\gamma_{i+2}(G_k) \cong C_p^{\,
     l(i+1)},
   \]
   where $l(i) \le i/2$ and $l(i+1) \le i/2 + 1$; below we will see
   that, in fact, all the generators appearing
   in~\eqref{equ:cases-I1-I2} are necessary.

   Now consider $i \in I_3$, that is $p^k +1 \le i \le 2p^k-2$ and
   $i \equiv_2 0$.  Lemma~\ref{lem:com-ids} yields
   \[
   c_{p^k+1} \equiv [y_k,x_k^{\, p^k}] = [y_k,1] = 1
   \pmod{\gamma_{p^k+2}(G_k)},
  \]
  thus $c_{p^k+1}\in\gamma_{p^{k}+2}(G_k)$ and
  $c_{p^k+1,n} \in \gamma_{p^k+n+2}(G_k)$ for $ n\ge 1$.  For similar
  reasons, we have $c_{n,p^k+1}\in \gamma_{p^k+n+2}(G_k)$ for all
  $n\ge 1$.  This yields, by induction on~$i$,
  \begin{equation}\label{equ:cases-I3-I4}
    \begin{split}
      \gamma_i(G_k) & = \langle c_{i-p^k+1,p^k-1}, \,
      c_{i-p^k+3,p^k-3},
      \, \ldots, \, c_{p^k-1,i-p^k+1} \rangle \gamma_{i+1}(G_k), \\
      \gamma_{i+1}(G_k) & = \langle c_{i-p^k+1,p^k}, \,
      c_{i-p^k+3,p^k-2}, \, \ldots, \, c_{p^k-1,i-p^k+2} \rangle
      \gamma_{i+2}(G_k).
    \end{split}
  \end{equation}
  Similarly as before, we observe that
   \[
   \gamma_i(G_k)/\gamma_{i+1}(G_k) \cong C_p^{\, l(i)} \qquad
   \text{and} \qquad \gamma_{i+1}(G_k)/\gamma_{i+2}(G_k) \cong C_p^{\,
     l(i+1)},
   \]
   where $l(i), l(i+1) \le (2p^k - i)/2$.  Extending the argument one
   step further, we obtain $\gamma_{2p^k}(G_k) = 1$: the group $G_k$
   has nilpotency class at most $2p^k -1$.

   Finally, it suffices to check that the upper bounds that we derived
   from \eqref{equ:cases-I1-I2} and \eqref{equ:cases-I3-I4} for the
   logarithmic orders
   $\log \lvert \gamma_i(G_k) : \gamma_{i+1}(G_k) \rvert$,
   $1 \le i \le 2p^k -1$, sum to the logarithmic order of~$G_k$.
   Indeed, based on Proposition~\ref{pro:order-G-k}, we confirm that
   \[
   (k+1) + \sum_{i=2}^{p^k} \lceil i/2 \rceil +
   \sum_{i=p^k+1}^{2p^k-1} \lceil (2p^k-i)/2 \rceil = k + p^k
   +\binom{p^k}{2} = \log_p \lvert G_k \rvert. \qedhere 
   \]
\end{proof}

\begin{corollary} \label{cor:index} 
  For $i \in \N$ we have
  \[
  \log_p \lvert Z : \gamma_i(G) \cap Z \rvert =
  \begin{cases}
    2 \sum_{j=1}^{(i-3)/2} j = (i^2-4i+3)/4 &
    \text{if $i \equiv_2 1$,} \\
    2 \sum_{j=1}^{(i-4)/2} j + \frac{i-2}{2} = (i^2-4i+4)/4 &
    \text{if $i \equiv_2 0$.}
  \end{cases}
  \]
\end{corollary}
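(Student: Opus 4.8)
The plan is to compute the successive sections $(\gamma_i(G)\cap Z)/(\gamma_{i+1}(G)\cap Z)$ one level at a time and then telescope.

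First I would record the evident pro-$p$ analogue of Proposition~\ref{pro:G-k-lcs}. Since $G\cong\varprojlim_k G_k$ with surjective transition maps, for each fixed $i$ one has $\gamma_i(G)=\varprojlim_k\gamma_i(G_k)$ and $\gamma_i(G)/\gamma_{i+1}(G)\cong\varprojlim_k\gamma_i(G_k)/\gamma_{i+1}(G_k)$; once $p^k\geq i$ the index $i$ belongs to $I_1\cup I_2$ for~$G_k$, the transition maps carry the generators displayed in Proposition~\ref{pro:G-k-lcs} to the like-named ones, and the orders no longer depend on~$k$, so the inverse limit is stationary. Hence $\gamma_i(G)/\gamma_{i+1}(G)$ is elementary abelian of logarithmic order $\lceil i/2\rceil$, with basis $c_i$ together with the comma-generators $c_{2,i-2},c_{4,i-4},\ldots$ (terminating in $c_{i-2,2}$ for $i$ even and in $c_{i-1,1}$ for $i$ odd), there being exactly $\lceil i/2\rceil-1$ of the latter. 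Each $c_{a,b}$ lies in $[H,H]=Z$ by Remark~\ref{rem:derived-subgroup}: indeed $c_a\in H$ and $H\trianglelefteq_\mathrm{c} G$ force $c_{a,1}=[c_a,y]\in[H,H]$, and then $c_{a,b}\in Z$ since $Z\trianglelefteq_\mathrm{c} G$; by contrast $c_i\notin Z$.

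Next I would exploit $G/Z=F/R\cong W$. For $i\geq2$ the canonical epimorphism $\gamma_i(G)/\gamma_{i+1}(G)\twoheadrightarrow\gamma_i(W)/\gamma_{i+1}(W)$ induced by $G\to G/Z$ has kernel $(\gamma_i(G)\cap Z)\gamma_{i+1}(G)/\gamma_{i+1}(G)\cong(\gamma_i(G)\cap Z)/(\gamma_{i+1}(G)\cap Z)$; since $\gamma_i(W)/\gamma_{i+1}(W)\cong C_p$ for all $i\geq2$ (Proposition~\ref{pro:Wk-lcs}, again passing to $W\cong\varprojlim_k W_k$) and the image of $c_i$ generates it, this kernel is a subspace of codimension~$1$ containing the $(\lceil i/2\rceil-1)$-dimensional span of the comma-generators, hence equal to it. Thus $\log_p\lvert\gamma_i(G)\cap Z:\gamma_{i+1}(G)\cap Z\rvert=\lceil i/2\rceil-1=\lfloor(i-1)/2\rfloor$ for $i\geq2$. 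For $i=1$ one notes separately that $H=\langle y\rangle\gamma_2(G)$, so $Z=[H,H]\leq\gamma_3(G)$ and $\gamma_1(G)\cap Z=\gamma_2(G)\cap Z=Z$, giving the level-$1$ contribution~$0$ — which is also the value of $\lfloor(i-1)/2\rfloor$ at $i=1$.

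It then remains to telescope and evaluate:
\[
\log_p\lvert Z:\gamma_i(G)\cap Z\rvert=\sum_{j=1}^{i-1}\log_p\lvert\gamma_j(G)\cap Z:\gamma_{j+1}(G)\cap Z\rvert=\sum_{j=1}^{i-1}\Big\lfloor\tfrac{j-1}{2}\Big\rfloor=\sum_{m=0}^{i-2}\Big\lfloor\tfrac{m}{2}\Big\rfloor,
\]
and computing $\sum_{m=0}^{i-2}\lfloor m/2\rfloor$ according to the parity of $i$ gives $(i^2-4i+3)/4$ for $i$ odd (the shape $2\sum_{j=1}^{(i-3)/2}j$) and $(i^2-4i+4)/4$ for $i$ even (the shape $2\sum_{j=1}^{(i-4)/2}j+(i-2)/2$), which are exactly the two expressions in the statement. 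I do not expect a genuine obstruction here; the only mildly delicate points are the transfer of Propositions~\ref{pro:G-k-lcs} and~\ref{pro:Wk-lcs} from the finite groups $G_k,W_k$ to $G,W$ via stabilisation of the relevant inverse limits, and the assertion that the comma-generators — and nothing else — account for the part of $\gamma_i(G)/\gamma_{i+1}(G)$ lying inside~$Z$, which the surjection onto $\gamma_i(W)/\gamma_{i+1}(W)\cong C_p$ makes transparent.
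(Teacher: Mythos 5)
Your proof is correct and follows essentially the same route as the paper: the paper compresses your level-by-level kernel computation into the single index identity $\lvert \gamma_2(G) : \gamma_i(G) \rvert = \lvert \gamma_2(W) : \gamma_i(W) \rvert \cdot \lvert Z : \gamma_i(G) \cap Z\rvert$ and reads off the two factors from Propositions~\ref{pro:Wk-lcs} and~\ref{pro:G-k-lcs}, which amounts to exactly your telescoped sum of codimension-one kernels. The stabilisation of the inverse limits that you spell out is left implicit in the paper.
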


\begin{proof}
  The claim follows from the standard identity
  \[
  \lvert \gamma_2(G) : \gamma_i(G) \rvert = \lvert \gamma_2(G) :
  \gamma_i(G) Z \rvert \lvert \gamma_i(G) Z : \gamma_i(G) \rvert =
  \lvert \gamma_2(W) : \gamma_i(W) \rvert \lvert Z : \gamma_i(G) \cap
  Z \rvert
  \]
  and Propositions~\ref{pro:Wk-lcs} and~\ref{pro:G-k-lcs}.
\end{proof}

From the lower central series of~$G_k$, it is easy to compute the
lower $p$-series and the dimension subgroup series of~$G_k$.

\begin{proposition} \label{pro:Gk-pcs} For $k\in\N$, the $p$-central
  series of $G_k$ has length $2p^k -1$ and its terms satisfy, for $1
  \le i \le 2p^k -1$, 
  \[
  P_i(G_k) = \langle x_k^{\, p^{i-1}} \rangle \gamma_i(G_k).
  \]
\end{proposition}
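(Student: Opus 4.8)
The plan is to argue by induction on $i$, using three ingredients already available: the explicit lower central series of $G_k$ from Proposition~\ref{pro:G-k-lcs}, the semidirect decomposition $G_k = \langle x_k\rangle \ltimes H_k$ from Proposition~\ref{pro:order-G-k}, and the commutator identities of Lemma~\ref{lem:com-ids}. As a preliminary step I would record two facts: first, $\gamma_2(G_k) = [G_k,G_k] \le H_k$, because $G_k/H_k \cong C_{p^k}$ is abelian; and second, $H_k$ has exponent~$p$, being freely generated in the variety of class-$2$ nilpotent groups of exponent~$p$ with $p$ odd. Consequently $\gamma_j(G_k)^p = 1$ for every $j \ge 2$, and Lemma~\ref{lem:com-ids} is applicable both to $G_k = \langle x_k,y_k\rangle$ and to any two-generated subgroup of~$G_k$ (whose derived subgroup lies in $\gamma_2(G_k)$ and hence has exponent~$p$).

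The base cases are immediate. Clearly $P_1(G_k) = G_k = \langle x_k\rangle\,\gamma_1(G_k)$. For $i=2$, since $G_k/\gamma_2(G_k) \cong C_{p^k}\times C_p$ with the second factor generated by the image of $y_k$ and $y_k^{\,p}=1$, we get $G_k^{\,p}\gamma_2(G_k) = \langle x_k^{\,p}\rangle\gamma_2(G_k)$, whence $P_2(G_k) = G_k^{\,p}[G_k,G_k] = \langle x_k^{\,p}\rangle\gamma_2(G_k)$.

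For the inductive step, suppose $P_i(G_k) = \langle x_k^{\,p^{i-1}}\rangle\gamma_i(G_k)$ with $i \ge 2$; as $\gamma_i(G_k)$ is normal, every element of $P_i(G_k)$ has the form $x_k^{\,p^{i-1}a}h$ with $a \in \Z$ and $h \in \gamma_i(G_k)$, and I would treat the two factors of $P_{i+1}(G_k) = P_i(G_k)^p[P_i(G_k),G_k]$ separately. For the commutator factor, applying the second congruence of Lemma~\ref{lem:com-ids} with $a = x_k$, $b = y_k$, $r = i-1$ — and observing that the subgroup $K(x_k,[x_k,y_k])$ is contained in $\gamma_{p^{i-1}+2}(G_k)$, while $[x_k,y_k] \in \gamma_2(G_k)$ — gives $[x_k^{\,p^{i-1}},y_k] \in \gamma_{p^{i-1}+1}(G_k) \le \gamma_{i+1}(G_k)$, using $p^{i-1} \ge i$. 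Since $G_k$ is $2$-generated and $[x_k^{\,p^{i-1}},x_k] = 1$, the image of $x_k^{\,p^{i-1}}$ in $G_k/\gamma_{i+1}(G_k)$ is then central, so $[x_k^{\,p^{i-1}},G_k] \le \gamma_{i+1}(G_k)$; together with $\gamma_{i+1}(G_k) = [\gamma_i(G_k),G_k] \le [P_i(G_k),G_k]$ this yields $[P_i(G_k),G_k] = \gamma_{i+1}(G_k)$. For the power factor, the first congruence of Lemma~\ref{lem:com-ids} with $r = 1$ applied to $x_k^{\,p^{i-1}a}h$ shows $(x_k^{\,p^{i-1}a}h)^p \equiv x_k^{\,p^i a} \pmod{\gamma_{i+1}(G_k)}$: indeed $(x_k^{\,p^{i-1}a})^p = x_k^{\,p^i a}$, the factor $h^p$ vanishes since $h \in \gamma_i(G_k) \le \gamma_2(G_k)$ and $i \ge 2$, the left-normed tail $[h,x_k^{\,p^{i-1}a},\overset{p-1}{\ldots},x_k^{\,p^{i-1}a}]$ lies in $\gamma_{i+p-1}(G_k)$, and the error subgroup $K(x_k^{\,p^{i-1}a},h)$ lies in $\gamma_{2i}(G_k)$ — all three of these sitting inside $\gamma_{i+1}(G_k)$. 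Hence $P_i(G_k)^p\,\gamma_{i+1}(G_k) = \langle x_k^{\,p^i}\rangle\,\gamma_{i+1}(G_k)$, and therefore $P_{i+1}(G_k) = \langle x_k^{\,p^i}\rangle\gamma_{i+1}(G_k)$, which closes the induction.

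Finally, reading off the length: by the formula just established, $P_i(G_k) = \langle x_k^{\,p^{i-1}}\rangle\gamma_i(G_k)$ is trivial if and only if $x_k^{\,p^{i-1}} = 1$ and $\gamma_i(G_k) = 1$, i.e. if and only if $i \ge k+1$ and (by Proposition~\ref{pro:G-k-lcs}) $i \ge 2p^k$; since $k+1 \le 2p^k$, this happens precisely when $i \ge 2p^k$. Thus $P_{2p^k-1}(G_k) = \gamma_{2p^k-1}(G_k) \ne 1 = P_{2p^k}(G_k)$, so the $p$-central series of $G_k$ has length $2p^k-1$. The one place where genuine care is needed is the bookkeeping in the inductive step — confirming that each auxiliary subgroup furnished by Lemma~\ref{lem:com-ids} really lands inside $\gamma_{i+1}(G_k)$ — but this follows routinely from the weight bounds in that lemma together with the fact that $\gamma_2(G_k)$ has exponent~$p$.
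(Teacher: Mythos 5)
Your proof is correct and follows essentially the same route as the paper: induction on $i$, splitting $P_{i+1}(G_k)=P_i(G_k)^p[P_i(G_k),G_k]$ into its two factors and using Lemma~\ref{lem:com-ids} together with the exponent-$p$ property of $\gamma_2(G_k)\le H_k$ and the bound $p^{i-1}\ge i$ to push each error term into $\gamma_{i+1}(G_k)$. Your bookkeeping (the centralizer argument for $[x_k^{\,p^{i-1}},G_k]\le\gamma_{i+1}(G_k)$ and the weight estimates for the $K(\cdot,\cdot)$ subgroups) is sound and only slightly more explicit than the paper's.
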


\begin{proof}
  The description of $P_1(G_k) = \gamma_1(G_k)$ is correct.  Now
  suppose that $i\ge 2$.  By induction, we have
  \[
  P_{i-1}(G_k) = \langle x_k^{\, p^{i-2}} \rangle \gamma_{i-1}(G_k).
  \]
  Recall that $P_i(G_k) = [P_{i-1}(G_k),G_k] P_{i-1}(G_k)^p$ and
  consider the two factors one after the other. The first factor
  satisfies
  \[ 
  [P_{i-1}(G_k),G_k] = [\langle x_k^{\, p^{i-2}} \rangle
  \gamma_{i-1}(G_k),G_k] = [\langle x_k^{\, p^{i-2}} \rangle,G_k]
  \gamma_{i}(G_k),
  \]
  and Lemma~\ref{lem:com-ids} yields
  \[
  [\langle x_k^{\, p^{i-2}} \rangle,G_k] \le [G_k^{\, p^{i-2}},G_k] \le
  \gamma_{p^{i-2}+1}(G_k).
  \]
  From $p^{i-2}+1 \ge i$ we deduce that
  $[P_{i-1}(G_k),G_k]= \gamma_{i}(G_k)$.

  The second factor satisfies
  \[
  P_{i-1}(G_k)^p\equiv \langle
  x_k^{\, p^{i-2}} \rangle^p \, \gamma_{i-1}(G_k)^p \equiv \langle
  x_k^{\, p^{i-1}} \rangle\pmod{\gamma_i(G_k)}.
  \]
  We conclude that
  $P_i(G_k)=\langle x_k^{\, p^{i-1}} \rangle \gamma_i(G_k)$.
\end{proof}

\begin{proposition} \label{pro:Gk-dss} For $k\in\N$, the dimension
  subgroup series of $G_k$ has length $2p^k-1$ and its terms satisfy,
  for $1 \le i \le 2p^k-1$,
  \[
  D_i(G_k) = \langle x_k^{p^{l(i)}}\rangle\gamma_i(G_k), \quad
  \text{where $l(i)=\lceil\log_p(i)\rceil$}.
  \]
\end{proposition}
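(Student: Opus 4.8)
\emph{Proof strategy (a plan).}
The plan is to mimic the proof of Proposition~\ref{pro:Gk-pcs} almost verbatim, running an induction on~$i$ against the recursive definition of~$\mathcal{D}$. For $i=1$ the assertion reads $D_1(G_k)=\langle x_k^{p^{0}}\rangle\gamma_1(G_k)=G_k$, which is clear since $l(1)=0$. For the inductive step I fix $i\ge 2$, set $m=\lceil i/p\rceil<i$, and use the inductive hypothesis $D_j(G_k)=\langle x_k^{p^{l(j)}}\rangle\gamma_j(G_k)$ for all $j<i$ to write
\[
D_i(G_k)=D_m(G_k)^{p}\cdot\prod_{j=1}^{i-1}[D_j(G_k),D_{i-j}(G_k)].
\]
Two elementary numerical facts will be used throughout: $p^{l(j)}\ge j$ for every~$j$ (since $l(j)=\lceil\log_p j\rceil\ge\log_p j$), and $l(m)+1=l(i)$, which I would verify by writing $p^{a-1}<i\le p^{a}$ and noting that then $p^{a-2}<m\le p^{a-1}$, so $l(m)=a-1=l(i)-1$ (the case $i\le p$, where $m=1$, being checked by hand).

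The first main step is Claim~A: $\prod_{j=1}^{i-1}[D_j(G_k),D_{i-j}(G_k)]=\gamma_i(G_k)$. The inclusion $\supseteq$ is immediate, since $[D_1(G_k),D_{i-1}(G_k)]\supseteq[G_k,\gamma_{i-1}(G_k)]=\gamma_i(G_k)$. For $\subseteq$, expanding each factor $D_j(G_k)=\langle x_k^{p^{l(j)}}\rangle\gamma_j(G_k)$ (the $\gamma$-parts being normal in $G_k$) reduces $[D_j(G_k),D_{i-j}(G_k)]$ to the normal closure of $[\langle x_k^{p^{l(j)}}\rangle,\langle x_k^{p^{l(i-j)}}\rangle]=1$, of $[\gamma_j(G_k),\gamma_{i-j}(G_k)]\le\gamma_i(G_k)$, and of the two mixed pieces $[x_k^{p^{l(j)}},\gamma_{i-j}(G_k)]$ and $[x_k^{p^{l(i-j)}},\gamma_j(G_k)]$. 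For the mixed pieces I would first establish, from the second identity of Lemma~\ref{lem:com-ids} applied inside $\langle x_k,z\rangle$ (whose derived subgroup lies in $\gamma_2(G_k)\le H_k$, hence has exponent~$p$), the uniform bound $[x_k^{p^{t}},\gamma_n(G_k)]\le\gamma_{n+p^{t}}(G_k)$; here the error term $K(x_k,[x_k,z])$ already lies in $\gamma_{2n+p^{t}}(G_k)\le\gamma_{n+p^{t}}(G_k)$. Since $p^{l(j)}\ge j$ this gives $[x_k^{p^{l(j)}},\gamma_{i-j}(G_k)]\le\gamma_{(i-j)+p^{l(j)}}(G_k)\le\gamma_i(G_k)$, and symmetrically for the other mixed piece, so every factor lies in $\gamma_i(G_k)$.

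The second main step is Claim~B: $D_m(G_k)^{p}\equiv\langle x_k^{p^{l(i)}}\rangle\pmod{\gamma_i(G_k)}$. Writing a general element of $D_m(G_k)$ as $x_k^{p^{l(m)}s}z$ with $z\in\gamma_m(G_k)$ and applying the first identity of Lemma~\ref{lem:com-ids},
\[
(x_k^{p^{l(m)}s}z)^{p}\equiv x_k^{p^{l(m)+1}s}\;z^{p}\;[z,x_k^{p^{l(m)}s},\overset{p-1}{\ldots},x_k^{p^{l(m)}s}]\pmod{K(x_k^{p^{l(m)}s},z)},
\]
I would check that each term on the right is as required modulo $\gamma_i(G_k)$: $x_k^{p^{l(m)+1}s}\in\langle x_k^{p^{l(i)}}\rangle$ by $l(m)+1=l(i)$; $z^{p}=1$ whenever $m\ge 2$, because $\gamma_m(G_k)\le\gamma_2(G_k)$ has exponent~$p$; the iterated commutator lies in $\gamma_{m+(p-1)p^{l(m)}}(G_k)\le\gamma_{pm}(G_k)\le\gamma_i(G_k)$ by repeated use of the bound above; and $K(x_k^{p^{l(m)}s},z)\le\gamma_i(G_k)$ as well, since a short induction on the weight shows that any nontrivial commutator of weight~$w$ in $\{x_k^{p^{l(m)}s},z\}$ lies in $\gamma_{wm}(G_k)$ (using $[\gamma_n(G_k),\gamma_m(G_k)]\le\gamma_{n+m}(G_k)$ and $[\gamma_n(G_k),\langle x_k^{p^{l(m)}}\rangle]\le\gamma_{n+p^{l(m)}}(G_k)\le\gamma_{n+m}(G_k)$), so a commutator of weight $\ge p$ lands in $\gamma_{pm}(G_k)\le\gamma_i(G_k)$. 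The exceptional case $m=1$ (equivalently $i\le p$) I would dispatch separately by computing directly that $G_k^{p}\le\langle x_k^{p}\rangle\gamma_i(G_k)$, again from Lemma~\ref{lem:com-ids} together with $\exp H_k=p$ and $\gamma_p(G_k)\le\gamma_i(G_k)$. Combining Claims~A and~B gives $D_i(G_k)=D_m(G_k)^{p}\gamma_i(G_k)=\langle x_k^{p^{l(i)}}\rangle\gamma_i(G_k)$ (the inclusion $\langle x_k^{p^{l(i)}}\rangle\subseteq D_i(G_k)$ being automatic from $x_k^{p^{l(i)}}=(x_k^{p^{l(m)}})^{p}\in D_m(G_k)^{p}$), which closes the induction. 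For the length statement: by Proposition~\ref{pro:G-k-lcs} we have $\gamma_{2p^{k}}(G_k)=1\neq\gamma_{2p^{k}-1}(G_k)$, and since $p>2$ forces $p^{k}<2p^{k}-1<2p^{k}\le p^{k+1}$ (the last inequality strict) we get $l(2p^{k}-1)=l(2p^{k})=k+1$, so $\langle x_k^{p^{l(i)}}\rangle=1$ for $i\in\{2p^{k}-1,2p^{k}\}$; hence $D_{2p^{k}}(G_k)=1$ while $D_{2p^{k}-1}(G_k)=\gamma_{2p^{k}-1}(G_k)\neq 1$, i.e.\ the dimension subgroup series has length $2p^{k}-1$.

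I expect the real obstacle to be the error-term bookkeeping in Claim~B (and to a lesser degree in Claim~A): one must verify that every object produced by Lemma~\ref{lem:com-ids} --- the normal closures $K(\cdot,\cdot)$ and the long iterated commutators --- is absorbed by $\gamma_i(G_k)$. This comes down to the interplay of $p^{l(j)}\ge j$ with $p\lceil i/p\rceil\ge i$, but the weight estimates need care, in particular the point that commuting with the high power $x_k^{p^{l(m)}}$ raises the lower central weight by $p^{l(m)}$ rather than by~$1$. One could bypass most of this by quoting Jennings' formula $D_n(G_k)=\prod_{jp^{t}\ge n}\gamma_j(G_k)^{p^{t}}$, which collapses to $G_k^{p^{l(n)}}\gamma_n(G_k)$ because $\gamma_j(G_k)$ has exponent~$p$ for $j\ge 2$; but one would still have to identify $G_k^{p^{l(n)}}$ with $\langle x_k^{p^{l(n)}}\rangle$ modulo $\gamma_n(G_k)$, which is once more an application of Lemma~\ref{lem:com-ids}.
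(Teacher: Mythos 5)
Your argument is correct, but it takes a genuinely different and considerably longer route than the paper. The paper's proof is precisely the ``bypass'' you mention in your final paragraph: it quotes Lazard's formula $D_i(G_k)=\prod_{np^m\ge i}\gamma_n(G_k)^{p^m}$, observes that this collapses to $G_k^{\,p^{l(i)}}\gamma_i(G_k)$ because $\gamma_2(G_k)$ has exponent~$p$, and then applies the first congruence of Lemma~\ref{lem:com-ids} a single time, in the form $(ab)^{p^{l(i)}}\equiv a^{p^{l(i)}}b^{p^{l(i)}}$ modulo $\gamma_{p^{l(i)}}(G_k)\le\gamma_i(G_k)$ (using $p^{l(i)}\ge i$), to replace $G_k^{\,p^{l(i)}}$ by $\langle x_k^{\,p^{l(i)}}\rangle$ modulo $\gamma_i(G_k)$ --- two lines in total. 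Your route instead unwinds the recursive definition of $\mathcal{D}$ by induction on~$i$, and the bookkeeping you correctly identify as the real work does go through: the weight-raising bound $[\gamma_n(G_k),x_k^{\,p^t}]\le\gamma_{n+p^t}(G_k)$ (main term of weight $n+p^t$, error term $K$ of weight $\ge 2n+p^t$), together with the numerical facts $p^{l(j)}\ge j$, $l(\lceil i/p\rceil)+1=l(i)$ and $p\lceil i/p\rceil\ge i$, absorbs every commutator and error term into $\gamma_i(G_k)$ in both Claims~A and~B, and the exceptional case $m=1$ is handled correctly. What your approach buys is independence from Lazard's theorem, at the cost of exactly the error-term analysis you anticipate; in both approaches the final identification of a power subgroup with $\langle x_k^{\,p^{l(i)}}\rangle$ modulo $\gamma_i(G_k)$ rests on Lemma~\ref{lem:com-ids} combined with $\exp H_k=p$. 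Your explicit verification of the length statement (via $\gamma_{2p^k}(G_k)=1$ and $l(2p^k-1)=k+1$, so that $x_k^{\,p^{l(i)}}=1$ there) is a small point the paper leaves implicit.
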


\begin{proof}
  Let $i \in \N$.  Since $\gamma_2(G_k)$ has exponent~$p$, Lazard's
  formula (see~\cite[Thm.~11.2]{DidSMaSe99}) shows that
  \[
  D_i(G_k) = \prod_{np^m\ge i} \gamma_n(G_k)^{p^m}  =
  G_k^{\, p^{l(i)}} \gamma_i(G_k), \quad \text{where $l(i) = \lceil
    \log_p(i) \rceil$.}
  \]
  Lemma~\ref{lem:com-ids} yields
  $a^{p^{l(i)}} b^{p^{l(i)}} \equiv (ab)^{p^{l(i)}}$
  modulo $\gamma_{p^{l(i)}}(G)$
  for all $a,b \in G_k$ and, as $p^{l(i)} \ge i$, we deduce that
  \[
  D_i(G_k) = \langle x_k^{p^{l(i)}} \rangle \gamma_i(G_k). \qedhere
  \]
\end{proof}


\section{Normal Hausdorff spectra} \label{sec:normal-hspec}

In this section we establish Theorem~\ref{thm:main}; we split the proof into
three parts and formulate three separate results, in dependence on the
filtration series.  We use the notation set up in the introduction; in
particular, $G \cong \varprojlim_k G_k$ denotes the group constructed
there.

\begin{theorem} \label{thm:spec-lcs} The pro-$p$ group $G$ has full
  normal Hausdorff spectra 
  \[
  \hspec_{\trianglelefteq}^{\mathcal{L}}(G)=[0,1] \qquad \text{and}
  \qquad \hspec_{\trianglelefteq}^{\mathcal{D}}(G)=[0,1],
  \]
  with respect to the lower $p$-series~$\mathcal{L}$ and the dimension
  subgroup series~$\mathcal{D}$.
\end{theorem}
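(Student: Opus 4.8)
The plan is to deduce both equalities from Proposition~\ref{pro:path-area}, applied to $G$ together with its abelian normal subgroup~$Z$, once for the lower $p$-series $\mathcal{S} = \mathcal{L}$ and once for the dimension subgroup series $\mathcal{S} = \mathcal{D}$. For each of these series I aim to show that $Z$ has \emph{strong} Hausdorff dimension $\xi = \hdim_G^{\mathcal{S}}(Z) = 1$; then Proposition~\ref{pro:path-area} yields $[0,1] = [0,\xi] \subseteq \hspec_{\trianglelefteq}^{\mathcal{S}}(G)$, and the reverse inclusion is automatic since Hausdorff dimension is $[0,1]$-valued.

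First I would identify the two series on~$G$ and their traces on~$Z$. Passing to the inverse limit in Propositions~\ref{pro:Gk-pcs} and~\ref{pro:Gk-dss} (the subgroups involved being closed with trivial intersection over~$k$) gives $P_i(G) = \overline{\langle x^{p^{i-1}}\rangle}\,\gamma_i(G)$ and $D_i(G) = \overline{\langle x^{p^{l(i)}}\rangle}\,\gamma_i(G)$, where $l(i) = \lceil \log_p i\rceil$. The key observation is that the procyclic $x$-part is invisible inside~$Z$: since $G/H$ is abelian one has $\gamma_2(G) \le H$, hence $Z \le H$ and $\gamma_i(G) \le H$ for $i \ge 2$, whereas $\overline{\langle x\rangle}\cap H = 1$ because $x$ maps to a topological generator of $G/H \cong \Z_p$. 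Thus any $z \in Z \cap P_i(G)$, written as $z = x^{p^{i-1}a}g$ with $g \in \gamma_i(G)$, satisfies $x^{p^{i-1}a} = zg^{-1} \in \overline{\langle x\rangle}\cap H = 1$, forcing $z = g \in \gamma_i(G)$; the same argument applies to $D_i(G)$. Hence
\[
P_i(G)\cap Z = D_i(G)\cap Z = \gamma_i(G)\cap Z \qquad\text{for all } i\ge 1,
\]
and by Corollary~\ref{cor:index} the co-index $\log_p\lvert Z : \gamma_i(G)\cap Z\rvert$ equals $i^2/4 - i + O(1)$.

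Next I would compute the denominators $\log_p\lvert G : P_i(G)\rvert$ and $\log_p\lvert G : D_i(G)\rvert$ by working inside a finite quotient $G_k$ with $k$ large relative to~$i$, where the relevant indices have stabilised. From Proposition~\ref{pro:order-G-k}, the lower central series data for $W_k$ and $G_k$ in Propositions~\ref{pro:Wk-lcs} and~\ref{pro:G-k-lcs}, and the factorisation behind Corollary~\ref{cor:index}, one obtains $\log_p\lvert G_k : \gamma_i(G_k)\rvert = (k+1) + (i-2) + \log_p\lvert Z : \gamma_i(G)\cap Z\rvert$; since $\langle x_k^{p^{i-1}}\rangle$ meets $\gamma_i(G_k)$ trivially and has order $p^{\,k-i+1}$ (resp.\ $\langle x_k^{p^{l(i)}}\rangle$ has order $p^{\,k-l(i)}$), removing the $x_k$-contribution leaves
\[
\log_p\lvert G : P_i(G)\rvert = \log_p\lvert Z : \gamma_i(G)\cap Z\rvert + (2i-2), \qquad
\log_p\lvert G : D_i(G)\rvert = \log_p\lvert Z : \gamma_i(G)\cap Z\rvert + (i-1+l(i)).
\]
In each case the denominator exceeds the numerator $\log_p\lvert Z : \gamma_i(G)\cap Z\rvert$ by a term of order $o(i^2)$, while the numerator grows like $i^2/4$; hence the Barnea--Shalev quotient converges to~$1$ as a proper limit, so $Z$ has strong Hausdorff dimension $\xi = 1$ with respect to both $\mathcal{L}$ and~$\mathcal{D}$.

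It remains to verify the other hypotheses of Proposition~\ref{pro:path-area}. The subgroup $Z$ is abelian, normal and infinite (indeed $\log_p\lvert Z_k\rvert = \binom{p^k}{2}\to\infty$), and $G/C_G(Z)$ is procyclic since $H \le C_G(Z)$, as $[H,Z]=1$, and $G/H \cong \Z_p$. For the technical condition, the displayed equality gives $S_i\cap Z = \gamma_{i+1}(G)\cap Z$, so one may take $n_i = i$, $M_i = \gamma_{i+1}(G)$ and—decisively—$e_i = 1$, since $Z$ is elementary abelian; then $\gamma_{n_i+1}(G)\cap Z = S_i\cap Z \le M_i$ and $e_i n_i / \log_p\lvert Z : M_i\cap Z\rvert = i/(i^2/4 + O(i)) \to 0$. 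Invoking Proposition~\ref{pro:path-area} for $\mathcal{L}$ and for~$\mathcal{D}$ then completes the proof. The only step demanding genuine care is the bookkeeping of the third paragraph: tracing the contributions of $W_k$, of $Z_k$, and of the cyclic factor $\langle x_k\rangle$ through the finite quotients $G_k$ so that the leading terms $i^2/4$ of numerator and denominator cancel in the limit; the rest is essentially formal.
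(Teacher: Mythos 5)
Your proof is correct and follows essentially the same route as the paper: identify $S_i \cap Z = \gamma_i(G)\cap Z$ via Propositions~\ref{pro:Gk-pcs} and~\ref{pro:Gk-dss}, show that $Z$ has strong Hausdorff dimension $1$ using Corollary~\ref{cor:index}, and invoke Proposition~\ref{pro:path-area}; the paper merely shortcuts your explicit computation of $\log_p\lvert G : S_i\rvert$ by rewriting the Barnea--Shalev quotient and bounding $\log_p\lvert G : S_i Z\rvert \le 2i$. One small slip to fix: in your final paragraph you assert $S_i\cap Z = \gamma_{i+1}(G)\cap Z$ and take $M_i = \gamma_{i+1}(G)$, which contradicts your own display $S_i\cap Z = \gamma_i(G)\cap Z$ and would violate the required containment $S_i\cap Z\le M_i$; taking $M_i=\gamma_i(G)$ (with $n_i=i$), as the paper does, repairs this without affecting the asymptotics.
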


\begin{proof}
  Let $\mathcal{S}$ be $\mathcal{L}$, resp.\ $\mathcal{D}$.  Write
  $\mathcal{S} \colon G = S_0 = S_1\ge S_2\ge\ldots$, where
  $S_i = P_i(G)$, resp.\ $S_i = D_i(G)$, for $i \ge 1$, and observe
  that $Z \le \gamma_2(G)$; compare Remark~\ref{rem:H'-eq-Z}.  Thus
  Proposition~\ref{pro:Gk-pcs}, resp.\ Proposition~\ref{pro:Gk-dss},
  yields
  \[
  S_i \cap Z = \gamma_i(G) \cap Z \quad \text{for $i\ge 1$}.
  \]

  From Corollary~\ref{cor:index} we see that
  \begin{equation} \label{equ:lim-zero} \lim_{i\rightarrow\infty}
    \frac{i}{\log_p \lvert Z : \gamma_{i}(G) \cap Z \rvert} = 0.
  \end{equation}
  This allows us to pin down the Hausdorff dimension of
  $Z \le_\mathrm{c} G$:
  \begin{multline*}
    \hdim_{G}^{\mathcal{S}}(Z) = \varliminf_{i \to \infty}
    \left(\frac{\log_p \lvert G : S_i \rvert }{\log_p \lvert S_iZ :
        S_i \rvert }\right)^{-1} = \varliminf_{i \to \infty}
    \left(\frac{\log_p \lvert G : S_iZ \rvert + \log_p \lvert S_iZ :
        S_i\rvert}{\log_p \lvert S_iZ : S_i \rvert}\right)^{-1} \\
    = \varliminf_{i \to \infty}\left( \frac{\log_p \lvert G : S_iZ
        \rvert}{\log_p \lvert Z : S_i \cap Z \rvert} + 1\right)^{-1}
    =\varliminf_{i \to \infty}\left(\dfrac{\log_p \lvert G :
        S_iZ \rvert}{\log_p \lvert Z : \gamma_i(G)\cap Z \rvert} + 1
    \right)^{-1}=1,
  \end{multline*}
  where the last equality follows from~\eqref{equ:lim-zero} and the
  fact that $\log_p \lvert G : S_iZ \rvert \le 2i$, by
  \cite[Prop.~2.6]{KlTh19} and Proposition~\ref{pro:Gk-dss}.  In
  particular, $Z$ has strong Hausdorff dimension.

  Thus Proposition~\ref{pro:path-area}, with $e_i=1$, $n_i=i$ and
  $M_i = \gamma_i(G)$, yields
  \[ 
  [0,1] = [0,\hdim_G^{\mathcal{S}}(Z)] \subseteq
  \hspec_{\trianglelefteq}^{\mathcal{S}}(G). \qedhere
  \]
\end{proof}

\begin{theorem} \label{thm:spec-pp} The pro-$p$ group $G$ has full
  normal Hausdorff spectra
  \[
  \hspec_{\trianglelefteq}^{\mathcal{P}}(G)=[0,1] \qquad \text{and}
  \qquad \hspec_{\trianglelefteq}^{\mathcal{P}^*}(G)=[0,1],
  \]
  with respect to the $p$-power series~$\mathcal{P}$ and the iterated
  $p$-power series $\mathcal{P}^*$.
\end{theorem}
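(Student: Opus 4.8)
The plan is to mimic the proof of Theorem~\ref{thm:spec-lcs}: apply Proposition~\ref{pro:path-area} to the infinite abelian normal subgroup $Z\trianglelefteq_\mathrm{c}G$, which is legitimate because $C_G(Z)\supseteq H$ and $G=\langle x\rangle H$, so $G/C_G(Z)$ is procyclic. Write $\mathcal S\colon G=S_0\geq S_1\geq\cdots$ with $S_i=\pi_i(G)=G^{p^i}$, resp.\ $S_i=\pi^*_i(G)$. Two things must be pinned down: the filtration $S_i$ \emph{modulo} $Z$ (to locate $\hdim_G^{\mathcal S}(Z)$ via $G/Z\cong W$) and the filtration $S_i\cap Z$ \emph{inside} $Z$. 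Modulo $Z$ the two series coincide: using Lemma~\ref{lem:com-ids} together with the identity $(1+t)^{p^j}=1+t^{p^j}$ over $\mathbb F_p$ — which pushes the leading term of $[c_m,\dot x^{p^ja},\ldots,\dot x^{p^ja}]$ to depth $\approx p^{j+1}$ — one shows that in $W=C_p\,\hat\wr\,\mathbb Z_p$ one has $W^{p^i}=\langle\dot x^{p^i}\rangle\gamma_{p^i}(W)$ and that the iterated $p$-power subgroups of $W$ equal these same subgroups. Hence $S_iZ=\langle x^{p^i}\rangle\gamma_{p^i}(G)Z$ in either case, and Propositions~\ref{pro:Wk-lcs}--\ref{pro:G-k-lcs} give $\log_p\lvert G:S_iZ\rvert=\log_p\lvert W:W^{p^i}\rvert=p^i+i-1$.

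For an upper bound on $S_i\cap Z$ I will use that $\exp(G_k)=p^{k+1}$ for all $k$: since $\langle x_k\rangle\cong C_{p^k}$, $H_k$ has exponent $p$, and $G_k$ has class $2p^k-1<p^{k+1}$, Lemma~\ref{lem:com-ids} forces $(x_k^ah)^{p^{k+1}}$ into $\gamma_{p^{k+1}}(G_k)=1$, while $(x_ky_k)^{p^k}\equiv c_{p^k}\not\equiv1\pmod{\gamma_{p^k+1}(G_k)}$ by Proposition~\ref{pro:G-k-lcs}. Consequently $\pi_i(G)$ — and $\pi^*_i(G)$, which contains $\pi_i(G)$ by an easy induction using $G^{p^i}\leq(G^{p^{i-1}})^p$ — maps to $1$ in $G_k$ whenever $k\leq i-1$, so $S_i\cap Z\leq\ker\!\big(Z\twoheadrightarrow Z_{i-1}\big)$, a subgroup of index $\lvert Z_{i-1}\rvert=p^{\binom{p^{i-1}}{2}}$ in $Z$ by Proposition~\ref{pro:order-G-k}. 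Combined with $\log_p\lvert G:S_iZ\rvert=p^i+i-1$ this gives
\[
\hdim_G^{\mathcal S}(Z)=\lim_{i\to\infty}\Big(\tfrac{\log_p\lvert G:S_iZ\rvert}{\log_p\lvert Z:S_i\cap Z\rvert}+1\Big)^{-1}=1,
\]
since $(p^i+i-1)/\binom{p^{i-1}}{2}\to0$; in particular $Z$ has strong Hausdorff dimension $1$ with respect to $\mathcal P$ and to $\mathcal P^*$.

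To complete the verification of Proposition~\ref{pro:path-area}'s hypotheses, note $e_i=1$ because $Z/(S_i\cap Z)$ is elementary abelian, and put $M_i=\ker(G\twoheadrightarrow G_{i-1})\trianglelefteq_\mathrm{o}G$, so $S_i\cap Z\leq M_i$ and $\log_p\lvert Z:M_i\cap Z\rvert=\binom{p^{i-1}}{2}$. It then remains to exhibit $n_i$ with $n_i=o(p^{2i})$ (so that $\varliminf_i n_i/\binom{p^{i-1}}{2}=0$) and $\gamma_{n_i+1}(G)\cap Z\leq S_i$; granting this, Proposition~\ref{pro:path-area} yields $[0,1]=[0,\hdim_G^{\mathcal S}(Z)]\subseteq\hspec_{\trianglelefteq}^{\mathcal S}(G)$, and the reverse inclusion is automatic, proving $\hspec_{\trianglelefteq}^{\mathcal P}(G)=\hspec_{\trianglelefteq}^{\mathcal P^*}(G)=[0,1]$. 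Since $\pi_i(G)\leq\pi^*_i(G)$, it suffices to treat $S_i=\pi_i(G)=G^{p^i}$, and everything needed for $\mathcal P^*$ — the lower bound on $S_i\cap Z$, the containment $\gamma_{n_i+1}(G)\cap Z\leq S_i$, and $\pi^*_i(G)Z=\pi_i(G)Z$ — then follows from the $\mathcal P$-case together with the inclusion $\pi_i\leq\pi^*_i$ and the mod-$Z$ computation above.

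The containment $\gamma_{n_i+1}(G)\cap Z\leq G^{p^i}$ for a suitable $n_i\asymp p^i$ is the heart of the matter and the main obstacle: because $Z\leq\gamma_{p^i}(G)Z$, no argument "modulo $Z$" is of any use, and one must place the generators $c_{a,b}$ of $\gamma_{n_i+1}(G)\cap Z$ explicitly inside $G^{p^i}$. The inputs are $x^{p^i}\in G^{p^i}$ and the $p^i$-th powers $(x^ah)^{p^i}$: Lemma~\ref{lem:com-ids} shows the error term $K(x^a,h)$ already lies in $\gamma_{p^i}(G)\cap Z$, so $(x^ah)^{p^i}\in x^{ap^i}\gamma_{p^i}(G)$, and further $[x^{p^i},c_m]\equiv c_{m+p^i}^{-1}\pmod{\gamma_{p^i}(G)\cap Z}$, whence $c_{m+p^i,1}=[x^{p^i},c_m,y]\in G^{p^i}$ and, bracketing with $x$, all $c_{a,b}$ with $a\geq p^i$ lie in $G^{p^i}$. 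The remaining generators of $\gamma_{n_i+1}(G)\cap Z$ (those of small first index) have to be reached by feeding these elements — and the elements $x^{-p^i}(xy^{x^{\,j}})^{p^i}$ — into the Hall--Witt relations and the $\langle x\rangle$-module structure of $Z$ encoded in Proposition~\ref{pro:G-k-lcs}; carrying this out carefully also fixes the admissible value of $n_i$ (one expects $n_i$ of order $p^i$, but any $n_i=o(p^{2i})$ with the stated containment suffices, which makes the requirement comfortably slack).
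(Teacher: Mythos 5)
Your overall strategy coincides with the paper's (apply Proposition~\ref{pro:path-area} to $Z$, with $e_i=1$ and $M_i$ chosen so that $\log_p\lvert Z:M_i\cap Z\rvert$ grows like $p^{2i}$, after checking that $Z$ has strong Hausdorff dimension~$1$), but the step you yourself flag as ``the heart of the matter and the main obstacle'' --- producing $n_i=o(p^{2i})$ with $\gamma_{n_i+1}(G)\cap Z\le G^{p^i}$ --- is left genuinely open, and your proposed route to it (placing the generators $c_{a,b}$ of small first index into $G^{p^i}$ by Hall--Witt manipulations) is not carried out and would be laborious. The paper closes this gap in one line, without any generator chasing: from the construction one has $G/G^{p^k}\cong G_k/G_k^{\,p^k}$, and since $G_k$ has nilpotency class $2p^k-1$ (Proposition~\ref{pro:G-k-lcs}), it follows that $\gamma_{2p^i}(G)\le G^{p^i}$; so $n_i=2p^i$ works outright. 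You could equally well have extracted this from the finite quotients you already invoke for the upper bound, so the missing ingredient is an idea (pass to $G_i$ and use its class), not a computation.

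A second, smaller problem is the reduction of $\mathcal{P}^*$ to $\mathcal{P}$. The inclusion $\pi_i(G)\le\pi_i^*(G)$ only transfers \emph{lower} bounds on $S_i$ (so it does give $\gamma_{n_i+1}(G)\cap Z\le\pi_i^*(G)$ once the $\mathcal{P}$-case is done), but the statements you need in the other direction --- $\pi_i^*(G)\cap Z\le M_i\cap Z$ for Proposition~\ref{pro:path-area}, and a lower bound on $\log_p\lvert Z:\pi_i^*(G)\cap Z\rvert$ for $\hdim_G^{\mathcal{P}^*}(Z)=1$ --- require an \emph{upper} bound on $\pi_i^*(G)$ itself, which does not follow from $\pi_i\le\pi_i^*$, nor from any computation modulo $Z$ (knowing $\pi_i^*(G)Z$ says nothing about $\pi_i^*(G)\cap Z$). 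The paper supplies exactly this by proving $\pi_i^*(G)\le\langle x^{p^i}\rangle\gamma_{p^i}(G)$ by induction on $i$ using Lemma~\ref{lem:com-ids}, whence $\pi_i^*(G)\cap Z\le\gamma_{p^i}(G)\cap Z$. You would need to add this induction (or an equivalent) to make the $\mathcal{P}^*$ half of your argument sound.
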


\begin{proof}
  Recall our notation $\pi_i(G) = G^{p^i}$ and $\pi_i^*(G)$ for the
  terms of the series $\mathcal{P}$ and~$\mathcal{P}^*$.  Our first
  aim is to show that
  \begin{equation} \label{equ:rel-p-to-ip} \gamma_{2p^i}(G) \le
    G^{p^i} \le \pi_i^*(G) \le \langle x^{p^i} \rangle \gamma_{p^i}(G)
    \quad \text{for $i \in \N_0$.}
  \end{equation}
 
  Let $i \in \N_0$.  From the construction of $G$ and $G_k$, it is
  easily seen that $G/G^{p^k} \cong G_k/G_k^{p^k}$ for $k \in \N$.
  Hence Proposition~\ref{pro:G-k-lcs} yields
  $\gamma_{2p^i}(G) \le G^{p^i}$.  Clearly, we have
  $G^{p^i} \le \pi_i^*(G)$.  It remains to justify the last inclusion
  in~\eqref{equ:rel-p-to-ip}.  We proceed by induction on~$i$.  For
  $i=0$ even equality holds, trivially.  Now suppose that $i \ge 1$.
  The induction hypothesis yields
  \[
  \pi_{i-1}^*(G) \le \langle x^{p^{i-1}} \rangle \gamma_{p^{i-1}}(G).
  \]
  Let $g \in \pi_{i-1}^*(G)$, and write $g = x^{m p^{i-1}} h$ with
  $m \in \Z$ and $h \in \gamma_{p^{i-1}}(G) \cap H$.
  Lemma~\ref{lem:com-ids} yields $g^p = x^{m p^i} z$ with
  $x^{m p^i} \in \langle x^{p^i} \rangle$ and
  $z \in\gamma_p(\langle x^{p^{i-1}}, h \rangle)$.  Thus it suffices
  to show that
  $\gamma_p(\langle x^{p^{i-1}}, h \rangle) \le \gamma_{p^i}(G)$.
  
  Suppose that $c$ is an arbitrary commutator of weight $n \ge 2$ in
  $\{ x^{p^{i-1}}, h \}$; we show by induction on $n$ that
  $c \in \gamma_{np^{i-1}}(\langle x^{p^{i-1}}, h \rangle)$.  For
  $n=2$, it suffices to consider $c = [h,x^{p^{i-1}}]$, and
  Lemma~\ref{lem:com-ids} shows that $c \in \gamma_{2p^{i-1}}(G)$.
  For $n\ge 3$, we see by induction that it suffices to consider
  $c = [d,h]$ and $[d, x^{p^{i-1}}]$ with
  $d \in \gamma_{(n-1)p^{i-1}}(G)$; if $c = [d,h]$, the result follows
  immediately, and, if $c = [d,x^{p^{i-1}}]$, the result follows again
  by Lemma~\ref{lem:com-ids}.  This concludes the proof
  of~\eqref{equ:rel-p-to-ip}.

  Let $\mathcal{S} = \mathcal{P}$, resp.\
  $\mathcal{S} = \mathcal{P}^*$, and write $S_i = \pi_i(G) = G^{p^i}$,
  resp.\ $S_i = \pi_i^*(G)$, for $i \in \N_0$.  Recall that
  $Z \le \gamma_2(G)$; compare Remark~\ref{rem:H'-eq-Z}.  Thus
  \eqref{equ:rel-p-to-ip} yields
  \begin{equation} \label{equ:Si-cap-Z-estimate} \gamma_{2p^i}(G) \cap
    Z \le S_i \cap Z \le \big(\langle x^{p^i} \rangle \gamma_{p^i}(G)
    \big) \cap Z =\gamma_{p^i}(G) \cap Z.
  \end{equation}
  From Corollary~\ref{cor:index} we see that
  \begin{equation} \label{equ:lim-zero-2}
    \lim_{i \to \infty}\frac{2p^i}{\log_p \lvert
      Z:\gamma_{p^i}(G) \cap Z \rvert}=0.
  \end{equation}
  As in the proof of Theorem~\ref{thm:spec-lcs} we want to apply
  Proposition~\ref{pro:path-area}, here with $e_i = 1$,
  $n_i = 2p^i$ and $M_i = \gamma_{p^i}(G)$, to conclude that $G$ has
  full normal Hausdorff spectrum.

  It remains to check that $\hdim_G^{\mathcal{S}}(Z)=1$.  We observe
  that, for $i \in \N_0$,
  \[
  \log_p \lvert G : S_i Z \rvert \le \log_p \lvert G_i : G_i^{\, p^i}
  Z_i \rvert \le \log_p \lvert W_i \rvert = i + p^i \le 2p^i,
  \]
  and thus, by~\eqref{equ:Si-cap-Z-estimate} and~\eqref{equ:lim-zero-2},
  \[
  \lim_{i \to \infty} \frac{\log_p \lvert G : S_i Z
    \rvert}{\log_p \lvert Z : S_i\cap Z \rvert} \le
  \lim_{i \to \infty} \frac{\log_p \lvert G : S_i Z
    \rvert}{\log_p \lvert Z : \gamma_{p^i}(G) \cap Z \rvert}  = 0.
  \]
  As in the proof of Theorem~\ref{thm:spec-lcs} we conclude that
  $\hdim_{G}^{\mathcal{S}}(Z) = 1$.
\end{proof}

A little extra work is required to determine the normal Hausdorff
spectrum of~$G$ with respect to the Frattini series.   We define
\[
z_{i,j} =
\begin{cases}
  [c_i,c_j] \in\gamma_{i+j}(G) & \text{for $i,j \ge 1$,} \\
  1 & \text{otherwise.}
\end{cases}
\]
Proposition~\ref{pro:Wk-lcs} and Remark~\ref{rem:derived-subgroup}
show that
\[
H =\langle c_i \mid i \ge 1 \rangle \qquad \text{and} \qquad
Z = \langle z_{i,j} \mid 1\le j<i \rangle.
\]
Moreover, from Corollary~\ref{cor:index} it can be seen that, for $k \ge 2$,
\begin{equation} \label{equ:gamma-cap-Z} \gamma_k(G) \cap Z = \langle
  z_{i,j} \mid 1 \le j <i \text{ and } i+j \ge k \rangle.
\end{equation}

\begin{lemma} \label{lem:double-prod} For $i,j\in\N$ and $r \in \N_0$,
  the following identity holds:
  \[
  [z_{i,j}, x, \overset{r}{\ldots}, x] = \prod_{s=0}^r 
    \prod_{t=0}^s \, z_{i+r-t,j+r-s+t}^{\, \binom{r}{s} \binom{s}{t}}.
  \]
\end{lemma}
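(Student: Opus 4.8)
The plan is to prove the identity by induction on $r$, the number of commutations with $x$. For the base case $r=0$ the right-hand side reduces to the single term $z_{i,j}$ (the only pair $(s,t)$ is $(0,0)$, with both binomial coefficients equal to $1$), so the identity holds trivially. For the inductive step, I would assume the formula for a given $r$ and compute $[z_{i,j},x,\overset{r+1}{\ldots},x] = \big[\,[z_{i,j},x,\overset{r}{\ldots},x],\,x\,\big]$ by applying the commutator-with-$x$ operation to the product on the right-hand side.

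The key computational input is the behaviour of the $z_{a,b}$ under commutation with $x$. Since $c_m = [c_{m-1},x]$ by definition, we have $[z_{a,b},x] = [[c_a,c_b],x]$, and by the Hall--Witt identity together with the fact that $Z = [H,H]$ is central in $H$ (so $[c_a,c_b,y]$-type terms vanish appropriately and $Z$ is abelian), this should reduce to $[c_{a+1},c_b]\,[c_a,c_{b+1}] = z_{a+1,b}\,z_{a,b+1}$, i.e. commutation with $x$ acts on indices by the "Leibniz rule" $z_{a,b}\mapsto z_{a+1,b}z_{a,b+1}$. Because $Z$ is abelian, this operation extends multiplicatively over products and commutes with taking $p$-power exponents, so applying it to $\prod_{s=0}^r\prod_{t=0}^s z_{i+r-t,\,j+r-s+t}^{\binom{r}{s}\binom{s}{t}}$ simply sends each factor $z_{i+r-t,\,j+r-s+t}^{\binom{r}{s}\binom{s}{t}}$ to the product of $z_{i+r+1-t,\,j+r-s+t}^{\binom{r}{s}\binom{s}{t}}$ and $z_{i+r-t,\,j+r+1-s+t}^{\binom{r}{s}\binom{s}{t}}$.

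What remains is a bookkeeping argument: after reindexing, I must check that the resulting exponent of each $z_{i+(r+1)-t',\,j+(r+1)-s'+t'}$ equals $\binom{r+1}{s'}\binom{s'}{t'}$. Tracking where the two images of a factor land, one contribution comes with $(s,t)\mapsto(s,t)$ shifting only the first index (this matches the "$j$" position being $j+(r+1)-(s+1)+t$, so $s'=s+1$, $t'=t+1$ after a suitable relabelling) and the other shifts only the second index (giving $s'=s+1$, $t'=t$); combining the overlapping contributions, the total exponent of the $(s',t')$-term is $\binom{r}{s'-1}\binom{s'-1}{t'-1} + \binom{r}{s'-1}\binom{s'-1}{t'} + (\text{a term from } s'=s) $, and the Pascal recurrences $\binom{r}{s'-1}+\binom{r}{s'}=\binom{r+1}{s'}$ and $\binom{s'-1}{t'-1}+\binom{s'-1}{t'}=\binom{s'}{t'}$ collapse this to $\binom{r+1}{s'}\binom{s'}{t'}$, with the edge cases ($t'=0$, $t'=s'$, $s'=0$, $s'=r+1$) checked separately. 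The main obstacle is purely organisational: getting the two index shifts aligned so that the Pascal identities apply cleanly in both indices simultaneously, and handling the boundary terms of the double sum without error; there is no conceptual difficulty once the Leibniz rule $[z_{a,b},x]=z_{a+1,b}z_{a,b+1}$ is established.
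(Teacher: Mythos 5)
Your overall strategy (induction on $r$, using that $Z$ is abelian so the commutation-with-$x$ map extends multiplicatively over products and powers, then Pascal-type identities on the exponents) is exactly the paper's approach. However, your key computational input is wrong: the rule is not $[z_{a,b},x]=z_{a+1,b}\,z_{a,b+1}$. Writing $[z_{a,b},x]=z_{a,b}^{\,-1}z_{a,b}^{\,x}$ and using $c_a^{\,x}=c_a c_{a+1}$ together with the bilinearity of commutators in the class-$2$ group $H$ (since $Z=[H,H]$ is centralised by $H$), one gets
\[
[z_{a,b},x]=z_{a,b}^{\,-1}[c_ac_{a+1},c_bc_{b+1}]=z_{a+1,b}\,z_{a,b+1}\,z_{a+1,b+1},
\]
a \emph{three}-term rule; the cross term $z_{a+1,b+1}$ does not vanish. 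Your two-term ``Leibniz rule'' is in fact inconsistent with the very statement you are proving: under it, every factor of $[z_{i,j},x,\overset{r}{\ldots},x]$ would have total index increase exactly $r$, giving $\prod_{u+v=r}z_{i+u,j+v}^{\binom{r}{u}}$, whereas the claimed formula contains, e.g., the factor $z_{i+r,j+r}$ (the $s=t=0$ term), whose indices have increased by $2r$ in total. Already for $r=1$ the lemma asserts $[z_{i,j},x]=z_{i+1,j}\,z_{i,j+1}\,z_{i+1,j+1}$, contradicting your rule.

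Your own bookkeeping at the end betrays the problem: you list three contributions to the exponent of the $(s',t')$-term, which cannot arise from a two-term rule. With the correct three-term relation, the induction goes through via the identity
\[
\binom{r-1}{s-1}\binom{s-1}{t}+\binom{r-1}{s-1}\binom{s-1}{t-1}+\binom{r-1}{s}\binom{s}{t}=\binom{r}{s}\binom{s}{t},
\]
one summand for each of the three factors $z_{a+1,b}$, $z_{a,b+1}$, $z_{a+1,b+1}$, which is precisely how the paper concludes. So the gap is not ``purely organisational'': you need to correct the commutator relation before the induction can work, and the cleanest derivation is via $z^{-1}z^{x}$ rather than Hall--Witt.
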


\begin{proof}
  We argue by induction on~$r$.  For $r=0$ both sides are equal
  to~$z_{i,j}$.  Now suppose that~$r \ge 1$.  We observe that, for
  $m,n \ge 1$,
  \begin{equation} \label{equ:z-rel} [z_{m,n},x] = z_{m,n}^{\, -1}
    [c_m^{\, x}, c_n^{\, x}] = z_{m,n}^{\, -1} [c_m c_{m+1}, c_n
    c_{n+1}] = z_{m+1,n} \, z_{m,n+1} \, z_{m+1,n+1}.
  \end{equation}
  Thus the induction hypothesis yields
  \[ 
  [z_{i,j}, x, \overset{r}{\ldots}, x] = [[z_{i,j}, x,
  \overset{r-1}{\ldots}, x],x] = \prod_{s=0}^r \prod_{t=0}^{s} \,
  [z_{i+r-1-t,j+r-1-s+t},x]^{\binom{r-1}{s} \binom{s}{t}} ,
  \]
  and, in view of~\eqref{equ:z-rel}, the result follows from the
  identity
  \begin{multline*}
    \binom{r-1}{s-1}\binom{s-1}{t} + \binom{r-1}{s-1}\binom{s-1}{t-1} +
    \binom{r-1}{s}\binom{s}{t}\\
    =\binom{r-1}{s-1}\binom{s}{t}+
    \binom{r-1}{s}\binom{s}{t} = \binom{r}{s} \binom{s}{t}
  \end{multline*}
  for $0 \le s \le r$ and $0 \le t \le s$.
\end{proof}

Lemma~\ref{lem:com-ids} and Lemma~\ref{lem:double-prod} lead directly to a
useful corollary.

\begin{corollary} \label{cor:p-k}
 For  $i,j\in\N$ and $k \in \N_0$, the following identity holds:
 \[ 
 [z_{i,j},x^{p^k}]=z_{i+{p^k},j}z_{i,j+p^k}z_{i+p^k,j+p^k}.
 \]
\end{corollary}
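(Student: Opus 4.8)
The plan is to read the identity off by substituting $r = p^k$ into Lemma~\ref{lem:double-prod}, after first rewriting the single commutator $[z_{i,j}, x^{p^k}]$ as an iterated left-normed commutator in~$x$ via Lemma~\ref{lem:com-ids}, and then discarding every contribution that dies because $Z$ has exponent~$p$.

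First I would establish the exact identity
\[
[z_{i,j}, x^{p^k}] = [z_{i,j}, x, \overset{p^k}{\ldots}, x].
\]
Applying the second congruence of Lemma~\ref{lem:com-ids} with $a = x$, $b = z_{i,j}$ and $r = k$ gives $[x^{p^k}, z_{i,j}] \equiv [x, z_{i,j}, x, \overset{p^k-1}{\ldots}, x]$ modulo $K(x, [x, z_{i,j}])$; this is legitimate because the relevant two-generator subgroup $\langle x, z_{i,j} \rangle$ has derived subgroup inside~$Z$ and hence of exponent~$p$ (alternatively, one runs the argument in a finite quotient $G_m$ with $m > k$ and passes to the inverse limit). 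The correction term $K(x, [x, z_{i,j}])$ is in fact trivial: any left-normed commutator in $\{ x, [x, z_{i,j}] \}$ of weight at least~$2$ in the letter $[x, z_{i,j}] \in Z$ collapses to~$1$, since as soon as the entry $[x, z_{i,j}]$ is reached a second time the partial commutator already lies in the abelian group~$Z$, and commuting an element of~$Z$ with another element of~$Z$ gives~$1$. To keep track of inverses I would introduce the endomorphism $\delta \colon Z \to Z$, $w \mapsto [w, x]$, of the abelian group~$Z$ (well defined since $Z \trianglelefteq G$); then $[z_{i,j}, x, \overset{r}{\ldots}, x] = \delta^r(z_{i,j})$ and $[x, z_{i,j}] = \delta(z_{i,j})^{-1}$, so $[x, z_{i,j}, x, \overset{p^k-1}{\ldots}, x] = \delta^{p^k}(z_{i,j})^{-1}$, whence $[z_{i,j}, x^{p^k}] = [x^{p^k}, z_{i,j}]^{-1} = \delta^{p^k}(z_{i,j}) = [z_{i,j}, x, \overset{p^k}{\ldots}, x]$.

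Next I would expand the right-hand side using Lemma~\ref{lem:double-prod} with $r = p^k$, which yields
\[
[z_{i,j}, x, \overset{p^k}{\ldots}, x] = \prod_{s=0}^{p^k} \prod_{t=0}^{s} z_{i+p^k-t, \, j+p^k-s+t}^{\, \binom{p^k}{s} \binom{s}{t}}.
\]
Since every $z_{m,n}$ has order dividing~$p$, only pairs $(s,t)$ with $\binom{p^k}{s} \binom{s}{t} \not\equiv 0 \pmod p$ survive; by Lucas' theorem $\binom{p^k}{s} \equiv 0 \pmod p$ unless $s \in \{ 0, p^k \}$ and $\binom{p^k}{t} \equiv 0 \pmod p$ unless $t \in \{ 0, p^k \}$, so the only surviving pairs are $(0,0)$, $(p^k, 0)$ and $(p^k, p^k)$, each with exponent~$1$, contributing the factors $z_{i+p^k, j+p^k}$, $z_{i+p^k, j}$ and $z_{i, j+p^k}$ respectively. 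As $Z$ is abelian their order is immaterial, so $[z_{i,j}, x^{p^k}] = z_{i+p^k, j} \, z_{i, j+p^k} \, z_{i+p^k, j+p^k}$, as claimed. I expect the only genuinely delicate point to be the first step — confirming that the error term $K(x, [x, z_{i,j}])$ of Lemma~\ref{lem:com-ids} vanishes so that the congruence there becomes an honest equality, and bookkeeping the inversions correctly; passing through the endomorphism~$\delta$ makes the latter entirely mechanical, and everything after the first step is just manipulation of binomial coefficients.
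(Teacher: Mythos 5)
Your argument is correct and is exactly the route the paper intends (the paper states the corollary follows directly from Lemma~\ref{lem:com-ids} and Lemma~\ref{lem:double-prod} without writing out the details): convert $[z_{i,j},x^{p^k}]$ into the $p^k$-fold left-normed commutator via Lemma~\ref{lem:com-ids}, noting that the error term vanishes because any commutator of weight at least $2$ in an entry from the abelian normal subgroup $Z$ is trivial, and then kill all but the three terms $(s,t)\in\{(0,0),(p^k,0),(p^k,p^k)\}$ in Lemma~\ref{lem:double-prod} using Lucas' theorem and the fact that $Z$ is elementary abelian. The bookkeeping via the endomorphism $\delta$ is a clean way to handle the sign/inversion issues, and no step is missing.
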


\begin{theorem} \label{thm:spec-frattini} The pro-$p$ group $G$ has
  full normal Hausdorff spectrum
  \[
  \hspec_{\trianglelefteq}^{\mathcal{F}}(G)=[0,1],
  \]
  with respect to the Frattini series~$\mathcal{F}$.
\end{theorem}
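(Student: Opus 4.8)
The plan is to apply Proposition~\ref{pro:path-area} to the abelian normal subgroup $Z$ with respect to $\mathcal{S}=\mathcal{F}$, along the lines of the proofs of Theorems~\ref{thm:spec-lcs} and~\ref{thm:spec-pp}; as there, $G=\overline{\langle x\rangle}\,C_{G}(Z)$ with $G/C_{G}(Z)$ procyclic (since $[H,Z]=1$), so the ambient hypotheses hold. Throughout put
\[
m_{i}=1+\tfrac{p^{i}-1}{p-1}=\tfrac{p^{i}+p-2}{p-1}\qquad(i\ge 0),
\]
so that $m_{0}=1$, $m_{1}=2$ and $m_{i}=m_{i-1}+p^{\,i-1}$. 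The argument has three parts: (i) a description of how the Frattini filtration of $G$ meets $Z$; (ii) the verification that $Z$ has strong Hausdorff dimension $1$ with respect to $\mathcal{F}$; (iii) the application of Proposition~\ref{pro:path-area}.

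For part~(i) I would first record the Frattini series of the pro-$p$ wreath product $W\cong C_{p}\,\hat{\wr}\,\Z_{p}$: using Lemma~\ref{lem:com-ids} and Proposition~\ref{pro:Wk-lcs} — the base group of $W$ being abelian, all the correction terms in Lemma~\ref{lem:com-ids} vanish — one checks by induction that $\Phi_{i}(W)=\langle\dot x^{\,p^{i}}\rangle\,\gamma_{m_{i}}(W)$ for $i\ge 1$, the step $m_{i}=m_{i-1}+p^{\,i-1}$ coming from $[\dot x^{\,p^{i-1}},[\dot y,\dot x,\overset{m_{i-1}-1}{\ldots},\dot x]]=[\dot y,\dot x,\overset{m_{i}-1}{\ldots},\dot x]^{-1}$ (this is essentially contained in \cite[VIII, \S 7]{Kl99}). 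Since $G/Z\cong W$ and the Frattini series is compatible with continuous epimorphisms, $\Phi_{i}(G)Z/Z=\Phi_{i}(W)$, whence $\log_{p}\lvert G:\Phi_{i}(G)Z\rvert=i+m_{i}-1\le 2m_{i}$. The core of part~(i) is the claim that, for $i\ge 1$,
\[
\Phi_{i}(G)\cap Z=\bigl\langle z_{a,b}\mid a\ge m_{i}\ \text{or}\ (a\ge m_{i-1}\ \text{and}\ b\ge m_{i-1})\bigr\rangle,
\]
to be proved by induction jointly with the auxiliary facts that $c_{j}\in\Phi_{i}(G)$ for all $j\ge m_{i}$ and that $\Phi_{i}(G)=\langle x^{p^{i}}\rangle\langle c_{j}\mid j\ge m_{i}\rangle\,(\Phi_{i}(G)\cap Z)$. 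In particular, by~\eqref{equ:gamma-cap-Z}, this gives the sandwich $\gamma_{2m_{i}}(G)\cap Z\le\Phi_{i}(G)\cap Z\le\gamma_{2m_{i-1}+1}(G)\cap Z$, which is all that part~(iii) will need.

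For the inclusion ``$\supseteq$'' one uses that $\Phi_{i-1}(W)\supseteq\gamma_{m_{i-1}}(W)$, so $\Phi_{i-1}(G)\cap H$ contains elements $\equiv c_{j}\pmod{Z}$ for $j\ge m_{i-1}$; as $[H,Z]=1$ this yields $z_{j,l}=[c_{j},c_{l}]\in[\Phi_{i-1}(G),\Phi_{i-1}(G)]\le\Phi_{i}(G)$ for $j,l\ge m_{i-1}$, and the remaining generators $z_{a,b}$ with $a\ge m_{i}$ are extracted from $[z_{a-p^{i-1},b},x^{p^{i-1}}]$ via Corollary~\ref{cor:p-k} (noting $x^{p^{i-1}},z_{a-p^{i-1},b}\in\Phi_{i-1}(G)$ and that the two superfluous factors produced have both indices $\ge m_{i-1}$); the auxiliary fact $c_{j}\in\Phi_{i}(G)$ follows from $[x^{p^{i-1}},c_{j-p^{i-1}}]\equiv c_{j}^{-1}\pmod{\langle z_{a,b}\mid a,b\ge m_{i-1}\rangle}$, obtained from Lemma~\ref{lem:com-ids}. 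The inclusion ``$\subseteq$'' is the main obstacle. Writing $\Phi_{i}(G)=\Phi_{i-1}(G)^{p}[\Phi_{i-1}(G),\Phi_{i-1}(G)]$ and invoking the inductively available normal form of $\Phi_{i-1}(G)$, one must show $\Phi_{i-1}(G)^{p}\le\langle x^{p^{i}}\rangle\gamma_{m_{i}}(G)(\gamma_{2m_{i-1}+1}(G)\cap Z)$, using that $\Phi_{i-1}(G)\cap H$ has exponent $p$ together with Lemma~\ref{lem:com-ids}, and $[\Phi_{i-1}(G),\Phi_{i-1}(G)]\le\gamma_{m_{i}}(G)(\gamma_{2m_{i-1}+1}(G)\cap Z)$, the latter by running through the commutators of the generators $x^{p^{i-1}}$, $c_{j}$ ($j\ge m_{i-1}$) and $z\in\Phi_{i-1}(G)\cap Z$ and their $\Phi_{i-1}(G)$-conjugates: $[c_{j},c_{l}]=z_{j,l}$ already lies in $\gamma_{2m_{i-1}+1}(G)\cap Z$, while the $Z$-components of $[x^{p^{i-1}},c_{j}]$ and of $[x^{p^{i-1}},z]$ are located in degree $\ge 2m_{i-1}+2$ using Lemma~\ref{lem:com-ids}, Lemma~\ref{lem:double-prod} and Corollary~\ref{cor:p-k} (rewriting a product of $c_{m}$'s in normal form only creates factors $z_{m,m'}$ with $m,m'>j\ge m_{i-1}$, and each bracketing with $x^{p^{i-1}}$ raises the relevant degree by $p^{i-1}$). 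Intersecting the resulting containment $\Phi_{i}(G)\le\langle x^{p^{i}}\rangle\gamma_{m_{i}}(G)(\gamma_{2m_{i-1}+1}(G)\cap Z)$ with $Z$ and using $2m_{i-1}+1<m_{i}$ gives ``$\subseteq$''. I expect maintaining this normal form of $\Phi_{i-1}(G)$ through the induction, and the precise placement of these $Z$-components, to be the most delicate and laborious step.

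Finally, for parts~(ii) and~(iii): by Corollary~\ref{cor:index}, $\log_{p}\lvert Z:\gamma_{2m_{i-1}+1}(G)\cap Z\rvert=m_{i-1}(m_{i-1}-1)$, which grows like $p^{2i-2}/(p-1)^{2}$ and hence dominates $\log_{p}\lvert G:\Phi_{i}(G)Z\rvert\le 2m_{i}$; together with the sandwich this gives, exactly as in Theorem~\ref{thm:spec-lcs},
\[
\hdim_{G}^{\mathcal{F}}(Z)=\lim_{i\to\infty}\Bigl(\frac{\log_{p}\lvert G:\Phi_{i}(G)Z\rvert}{\log_{p}\lvert Z:\Phi_{i}(G)\cap Z\rvert}+1\Bigr)^{-1}=1,
\]
a genuine limit, so $Z$ has strong Hausdorff dimension $1$. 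Proposition~\ref{pro:path-area}, applied with $e_{i}=1$ (since $Z/(\Phi_{i}(G)\cap Z)$ is elementary abelian), $n_{i}=2m_{i}-1$ and $M_{i}=\gamma_{2m_{i-1}+1}(G)$ — the sandwich providing $\gamma_{n_{i}+1}(G)\cap Z\le\Phi_{i}(G)\cap Z\le M_{i}$, and $\varliminf_{i}e_{i}n_{i}/\log_{p}\lvert Z:M_{i}\cap Z\rvert=\varliminf_{i}(2m_{i}-1)/(m_{i-1}(m_{i-1}-1))=0$ — then yields $[0,1]=[0,\hdim_{G}^{\mathcal{F}}(Z)]\subseteq\hspec_{\trianglelefteq}^{\mathcal{F}}(G)$, and the reverse inclusion is trivial.
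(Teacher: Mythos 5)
Your overall strategy coincides with the paper's: compute $\Phi_i(G)$ modulo $Z$ via the wreath product, sandwich $\Phi_i(G)\cap Z$ between two terms $\gamma_j(G)\cap Z$ of the lower central series, and feed this into Proposition~\ref{pro:path-area} with $e_i=1$ and $M_i$ a lower central term; your parameters agree with the paper's up to a shift of one in the indices. However, the statement you designate as ``the core of part~(i)'' --- the exact equality $\Phi_i(G)\cap Z=\langle z_{a,b}\mid a\ge m_i \text{ or } (a\ge m_{i-1}\text{ and } b\ge m_{i-1})\rangle$ --- is false, so the ``delicate and laborious step'' you defer cannot be completed as stated. Concretely, take $p=3$ and $i=3$, so $m_1=2$, $m_2=5$, $m_3=14$. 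Then $z_{4,2}=[c_4,c_2]\in[\Phi_1(G),\Phi_1(G)]\le\Phi_2(G)$ and $x^{9}\in\Phi_2(G)$, so Corollary~\ref{cor:p-k} gives
\[
[z_{4,2},x^{9}]=z_{13,2}\,z_{11,4}^{-1}\,z_{13,11}\in[\Phi_2(G),\Phi_2(G)]\le\Phi_3(G).
\]
Since $z_{13,11}$ lies in your generating set while $z_{13,2}$ and $z_{11,4}$ do not (each has first index $<14$ and second index $<5$), and the $z_{a,b}$ with $a>b\ge1$ are independent in the elementary abelian group $Z$, the element $z_{13,2}z_{11,4}^{-1}$ belongs to $\Phi_3(G)\cap Z$ but not to your span. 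The mechanism is that generators $z_{a,b}$ of $\Phi_{i-1}(G)\cap Z$ with $m_{i-2}\le b<a<m_{i-1}$ produce, under bracketing with $x^{p^{i-1}}$, cross terms $z_{a+p^{i-1},b}$ with $a+p^{i-1}<m_i$ and $b<m_{i-1}$. Relatedly, your closing step --- intersecting $\Phi_i(G)\le\langle x^{p^i}\rangle\gamma_{m_i}(G)\,(\gamma_{2m_{i-1}+1}(G)\cap Z)$ with $Z$ --- only yields $\Phi_i(G)\cap Z\le\gamma_{\min(m_i,\,2m_{i-1}+1)}(G)\cap Z$, never the claimed equality.

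The repair is to abandon the exact description altogether: everything downstream uses only the two containments $\gamma_{2m_i}(G)\cap Z\le\Phi_i(G)\cap Z\le\gamma_{2m_{i-1}+1}(G)\cap Z$, and these are correct. The lower one follows from the ``$\supseteq$'' arguments you give (which are sound, since $a+b\ge 2m_i$ with $a>b$ forces $a\ge m_i$); the upper one follows from the containment $\Phi_i(G)\le\langle x^{p^i}\rangle\langle c_j\mid j\ge m_i\rangle\,(\gamma_{2m_{i-1}+1}(G)\cap Z)$, which is true and is essentially the paper's $\Phi_i(G)\le\Psi_i^+(G)$ (the paper uses $\gamma_{2m_{i-1}}$, which works just as well). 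Run the induction on the pair of one-sided bounds rather than on a putative normal form for $\Phi_i(G)\cap Z$; both bounds are of the form ``explicit complement times $\gamma_j(G)\cap Z$'' and are stable under applying $\Phi$, using only \eqref{equ:c-n-x-comm}-type congruences from Lemma~\ref{lem:com-ids}, Corollary~\ref{cor:p-k}, and the fact that $H$ has exponent $p$. With that change your parts (ii) and (iii) go through verbatim.
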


\begin{proof}
  For $i \in \N_0$, we write $[i]_p = (p^i-1)/(p-1)$ and note, for
  $i \ge 1$, that $[i-1]_p + p^{i-1} = [i]_p$.  We consider
  \[
  C_i = \langle x^{p^i} \rangle \ltimes \langle c_j \mid j \ge
  1 + [i]_p \rangle \le_\mathrm{c} G
  \]
  and claim, for $i \ge 1$, that
  \begin{equation} \label{equ:Ck-claim} \Psi_i^-(G) \le \Phi_i(G)
    \le \Psi_i^+(G),
  \end{equation}
  where
  \[
  \Psi_i^-(G) = C_i \big(
  \gamma_{1+2 [i-1]_p +p^{i-1}}(G) \cap Z \big) \quad
  \text{and} \quad \Psi_i^+(G) = C_i \big(
  \gamma_{2 + 2 [i-1]_p}(G) \cap Z \big).
  \]
  For $i=1$ the assertion is that
  $\Phi(G) = C_1 (\gamma_2(G) \cap Z) = \langle x^p, c_2, c_3 , \ldots
  \rangle (\gamma_2(G) \cap Z)$,
  which follows from Proposition~\ref{pro:Wk-lcs} and the fact that
  $Z \le \gamma_2(G)$.  Now suppose that $i \ge 2$.
  Lemma~\ref{lem:com-ids} and the observation that
  $p^{i-1} \ge 2 p^{i-2}$ yield
  \[
  [\gamma_{2+2 [i-2]_p}(G) \cap Z,x^{p^{i-1}}] \le \gamma_{2+2 [i-2]_p
    +p^{i-1}}(G) \cap Z \le \gamma_{2+2 [i-1]_p}(G) \cap Z;
  \]
  by construction, we have
  $[\gamma_{2 + 2 [i-2]_p}(G) \cap Z, c_n] = 1$ for all $n \ge 1$.
  Furthermore, Lemma~\ref{lem:com-ids} gives
  \begin{equation} \label{equ:c-n-x-comm} [c_n,x^{p^{i-1}}] \equiv
    c_{n+{p^{i-1}}} \pmod{\gamma_{2n+p^{i-1}}(G)\cap Z} \quad
    \text{for all $n \ge 1$,}
  \end{equation}
  and hence
  \[ 
  [C_{i-1},x^{p^{i-1}}] \le C_i\big(\gamma_{2 + 2 [i-1]_p +
    p^{i-1}}(G) \cap Z \big).
  \]
  By induction,
  $\Phi_{i-1}(G) \le \Psi_{i-1}^+(G) = C_{i-1} \big(\gamma_{2+2
    [i-2]_p}(G) \cap Z \big)$, and this implies
  \begin{multline*}
    \Phi_i(G) = \Phi(\Phi_{i-1}(G)) \le \langle x^{p^i} \rangle
    [C_{i-1},C_{i-1}] \big(\gamma_{2 + 2 [i-1]_p}(G) \cap Z \big) \\ \le
    C_i\big(\gamma_{2 + 2 [i-1]_p}(G) \cap Z \big) = \Psi_i^+(G).
  \end{multline*}
  
  It remains to check the first inclusion in~\eqref{equ:Ck-claim}; by
  induction, it suffices to show that
  \[
  \Psi_i^-(G) \le K, \quad \text{where
    $K = \Phi\big( \Psi_{i-1}^-(G) \big).$}
  \]
  First we show that $\gamma_{1+2 [i-1]_p + p^{i-1}}(G)\cap Z \le K$
  implies $C_i \le K$.  Clearly, $x^{p^i} \in C_{i-1}^{\, p} \le K$,
  and \eqref{equ:c-n-x-comm} shows that, for $j \ge 1 + [i]_p$, there
  exists
  $d_j \in \gamma_{2 (j-p^{i-1}) + p^{i-1}}(G) \cap Z \le \gamma_{1 +
    2 [i-1]_p + p^{i-1}}(G) \cap Z$ such that
  \[
  c_j = [c_{j-p^{i-1}}, x^{p^{i-1}}] d_j \in [C_{i-1},C_{i-1}] \le K.
  \]
  Thus it suffices to prove that
  $\gamma_{1+2 [i-1]_p + p^{i-1}}(G) \cap Z \le K$.

  From \eqref{equ:gamma-cap-Z} we recall that
  \[
  \gamma_{1+ 2 [i-1]_p + p^{i-1}}(G) \cap Z = \langle z_{j,k} \mid 1
  \le k < j \text{ and } j+k \ge 1+2 [i-1]_p +p^{i-1} \rangle.
  \]
  From $[C_{i-1},C_{i-1}] \le K$ we deduce that 
  \begin{equation} \label{equ:z-m-n-criterion} z_{m,n} \in K \quad
    \text{for $m > n \ge 1 + [i-1]_p$.}
  \end{equation}
  Thus, it remains to see that $z_{j,k} \in K$ for $j, k \in \N$
  satisfying
  \[
  1 \le k < j, \qquad j+k \ge 1 + 2 [i-1]_p + p^{i-1} \qquad \text{and}
  \qquad k \le [i-1]_p.
  \]
  Given such $j, k \in \N$, we observe that
  \[
  k < 1 + [i-1]_p \le j - p^{i-1} \qquad \text{and} \qquad (j - p^{i-1})
  + k \ge 1 + 2 [i-1]_p;
  \]
  hence \eqref{equ:gamma-cap-Z} implies
  \[
  z_{j - p^{i-1},k} \in \gamma_{1 + 2 [i-1]_p}(G) \cap Z \le \gamma_{1+2
    [i-2]_p +p^{i-2}}(G) \cap Z \le \Psi_{i-1}^-(G).
  \]
  We apply Corollary~\ref{cor:p-k} to deduce that
  \begin{equation} \label{equ:z-product} z_{j,k} \,
    z_{j - p^{i-1}, k + p^{i-1}} \, z_{j, k + p^{i-1}} =
    [z_{j - p^{i-1},k}, x^{p^{i-1}}] \in [\Psi_{i-1}^-(G), C_{i-1}] \le
    K.
  \end{equation}

  As $j > k + p^{i-1} \ge 1 + [i-1]_p$, we see from
  \eqref{equ:z-m-n-criterion}, for $m=j$ and $n=k + p^{i-1}$ that
  $z_{j, k + p^{i-1}} \in K$.  Similarly, we deduce that
  $z_{j - p^{i-1}, k + p^{i-1}} \in K$, if $j - p^{i-1} > k+p^{i-1}$, and,
  finally,
  $z_{j - p^{i-1}, k + p^{i-1}} = z_{k + p^{i-1}, j - p^{i-1}}^{\, -1} \in K$,
  if $j-p^{i-1} \le k + p^{i-1}$ and thus $j - p^{i-1} \ge 1+ [i-1]_p$.
  Feeding this information into \eqref{equ:z-product}, we obtain
  $z_{j,k} \in K$ which concludes the proof of~\eqref{equ:Ck-claim}.

  From~\eqref{equ:Ck-claim} we deduce that
  \[
  \gamma_{1 + 2 [i-1]_p + p^{i-1}}(G) \cap Z \le \Phi_i(G)
  \cap Z \le \gamma_{2 + 2 [i-1]_p}(G) \cap Z,
  \]
  and from Corollary~\ref{cor:index} we see that
  \[
  \lim_{i \to \infty} \frac{2 [i-1]_p + p^{i-1}}{\log_p \vert Z :
    \gamma_{2+2 [i-1]_p}(G) \cap Z \rvert} = 0.
  \]
  As in the proof of Theorem~\ref{thm:spec-lcs} we want to apply
  Proposition~\ref{pro:path-area}, here with $e_i = 1$,
  $n_i = 2 [i-1]_p +p^{i-1}$ and $M_i = \gamma_{2 + 2 [i-1]_p}(G)$, to
  conclude that $G$ has full normal Hausdorff spectrum.

  It remains to check that $\hdim_G^{\mathcal{F}}(Z)=1$.  From
  \cite[Prop.~2.6]{KlTh19} we see that
  $\log_p \lvert G : \Phi_i(G) Z \rvert = i + [i]_p$, and hence
  Corollary~\ref{cor:index} implies
  \[
  \lim_{i \to \infty} \frac{\log_p \lvert G : \Phi_i(G)Z
    \rvert}{\log_p \lvert Z : \Phi_i(G) \cap Z \rvert} = 0.
  \]
  As in the proof of Theorem~\ref{thm:spec-lcs} we see that
  $\hdim_G^{\mathcal{F}}(Z)=1$.
\end{proof}

Theorem~\ref{thm:main} summarises the results in
Theorems~\ref{thm:spec-lcs}, \ref{thm:spec-pp}
and~\ref{thm:spec-frattini}.


\end{document}